\numberwithin{equation}{section}
\theoremstyle{definition}
\newtheorem{thm}{Theorem}[section]
\newtheorem{lem}[thm]{Lemma}
\newtheorem{prop}[thm]{Proposition}
\newtheorem{rem}[thm]{Remark}
\newcommand{\R}{\mathbb{R}}   
\newcommand{\N}{\mathbb{N}}   
\newcommand{\Z}{\mathbb{Z}}
\newcommand{\ba}{{\bm{a}}}
\newcommand{\bc}{{\bm{c}}}
\newcommand{\bx}{{\bm{x}}}
\newcommand{\by}{{\bm{y}}}
\begin{document}

\title{
\vspace{-0.75cm}
\Large{\bf Higher-order Asymptotic Expansion with Error Estimate for the Multidimensional Laplace-type Integral under Perturbations}
}
\author{Ikki Fukuda, Yoshiki Kagaya and Yuki Ueda}
\date{}

\maketitle

\footnote[0]{2020 Mathematics Subject Classification: 41A60, 41A80.}

\vspace{-1.25cm}
\abstract{
We consider the asymptotic behavior of the multidimensional Laplace-type integral with a perturbed phase function. 
Under suitable assumptions, we derive a higher-order asymptotic expansion with an error estimate, generalizing some previous results including Laplace's method. 
The key points of the proof are a precise asymptotic analysis based on a lot of detailed Taylor expansions, and a careful consideration of the effects of the perturbations on the Hessian matrix of the phase function. 
}

\bigskip
\noindent
{\bf Keywords:} 
Laplace-type integral; Higher-order asymptotic expansion; Error estimate; Matrix analysis. 

\section{Introduction}

\indent

We consider the asymptotic behavior of the following integral as $n\to \infty$:
\begin{align}\label{eq:Laplace_integral}
I_{n} = \int_{\Omega} e^{nh_{n}(\bx)} g(\bx) d\bx, \quad n\in \mathbb{N},
\end{align}
where $\Omega$ is a bounded domain in $\R^d$, $d \in \mathbb{N}$ and $g : \overline{\Omega} \to \R$ are sufficiently smooth. 
On the other hand, $h_{n} : \overline{\Omega} \to \R$ are given by the form $h_{n}(\bx)=h(\bx)+\varepsilon_{n}\sigma(\bx)$ for all $n\in \mathbb{N}$, where $\varepsilon_{n} >0$ satisfies $\varepsilon_{n} \to 0$ as $n\to \infty$, and $h, \sigma : \overline{\Omega} \to \R$ are also sufficiently smooth (we will explain the detailed assumptions on the decay order of $\varepsilon_{n}$ and on the smoothness of the above functions later). 
This integral \eqref{eq:Laplace_integral} is called the Laplace-type integral, which has several applications in various fields of science and engineering as well as mathematics, and the literature on this integral is quite extensive. 
In particular, the Laplace-type integral \eqref{eq:Laplace_integral} often appears in the probability theory and statistics (see, e.g. \cite{B23, B86, DZ98, KTK90, M82, M08, DZ03, W19, WU24} and also references therein). 
For example, we note that the asymptotic analysis for \eqref{eq:Laplace_integral} can be applied to the large deviation principle (cf. \cite{DZ98,DZ03}) and Bayesian statistics (cf. \cite{B23, KTK90, M08}). 
Moreover, it is also applied to the study of speech recognition \cite{FDAK01} and signal processing \cite{MQG11}.  
Furthermore, it helps reliability analysis method combined with artificial neural network too \cite{JW22}. 
In addition, it also has applications in the field of random chaos \cite{KPH15}. 
Although there is a huge amount of literature on the Laplace-type integral other than those mentioned here, 
there are not a few things that are not mathematically rigorous. 
Therefore, we believe that providing some theoretical results of \eqref{eq:Laplace_integral} might be useful in not only pure mathematics, but also in many areas of science and engineering. 
In research aimed at generalizing the Laplace-type integral, the one-dimensional case $d=1$ is frequently examined (cf. \cite{FK24, N20, O94, W19, WU24} and also references therein). 
However, we focus on the multidimensional case in this paper, since it is important in view of some applications (e.g. \cite{B23, KTK90, K20, L19, M08, MQG11}). 
The purpose of this paper is to derive a higher-order asymptotic expansion with an error estimate for the multidimensional Laplace-type integral \eqref{eq:Laplace_integral}. 
In particular, we deal with the case where the phase function includes the perturbations $\varepsilon_{n}\sigma(\bx)$, and analyze the effect of the perturbations on the asymptotic behavior of $I_{n}$ as $n\to \infty$.

Before presenting our main result, let us introduce some known results related to the theoretical analysis for \eqref{eq:Laplace_integral}. 
For the asymptotic behavior of $I_{n}$ as $n\to \infty$, a famous technique called Laplace's method or Laplace's approximation introduced by Laplace~\cite{L12} is well known. It has been developed in various forms over many years of study. 
For example, we can refer to \cite{FK24, KTK90, K10, K20, L19, M08, N20, O94, W19, WU24} and also references therein. 
First, we shall explain the classical result of an asymptotic analysis for \eqref{eq:Laplace_integral} with $\sigma(\bx)\equiv 0$. 
Assume that $h : \overline{\Omega} \to \R$ has a maximum only at $\bx = \bc \in \Omega$, and is twice continuously differentiable function around $\bx=\bc$ satisfying $\det D^{2}h(\bc)\neq 0$, where $D^{2}h(\bc)$ is the Hessian matrix of $h$ at $\bx = \bc$. 
Then, if $g(\bc)\neq0$, $I_{n}$ satisfies the following asymptotic formula:
\begin{equation}\label{thm-Laplace-1}
I_{n} \sim e^{nh(\bc)}n^{-\frac{d}{2}}g(\bc)\sqrt{\frac{(2\pi)^d}{|\det D^{2}h(\bc)|}}
\end{equation}
as $n\to \infty$. The notation ``$\sim$'' is used to mean that the quotient of the left hand side by the right hand side converges 1 as $n\to \infty$. 
As we mentioned in the above, some related results to this formula (including the case of $\sigma(\bx) \neq0$) have already been obtained by many mathematicians. 
Indeed, as a classic reference, we can refer to Olver's textbook \cite{O94}. 
Moreover, for the multidimensional case, let us introduce the textbooks by Bleistein--Handelsman \cite{BH75} and Wong \cite{W89}. 
Furthermore, some generalized asymptotic formulas for $I_{n}$ with $\sigma(\bx)\equiv 0$ can be found in Kirwin \cite{K10} and Katsevich \cite{KP} for example (see, also \cite{KTK90, M08}), under different assumptions and methods from ours below. 
Furthermore, \L{}api\'{n}ski \cite{L19} and Kolokoltsov \cite{K20} obtained some asymptotic formulas for the Laplace-type integral when the phase function depends on not only $\bx \in \Omega$ but also $n\in \mathbb{N}$ like our setting. 
Also, we can see a lot of generalizations of \eqref{thm-Laplace-1} for the one-dimensional case $d=1$, e.g. \cite{FK24, N20, O94, W19, WU24}. 
Especially, as a result of its most generalized form (up to the authors knowledge), let us refer to the recent paper by Nemes~\cite{N20}. 

Finally, we shall explain some motivations of our study. 
Although the above result \eqref{thm-Laplace-1} is useful, there are some problems that could easily arise. 
For example, when the maximum point of $h(\bx)$ and the zero point of $g(\bx)$ overlap, i.e., when $g(\bc)=0$, this formula does not make sense, 
because the asymptotic profile vanishes identically. 
Therefore, for the more general $I_{n}$ in which such situation occurs, we need to construct a new asymptotic formula. 
Moreover, from an application point of view, an error between $I_{n}$ and the asymptotic profile should also be investigated. 
In the previous studies mentioned above, some formulas that overcome these problems have been obtained in \cite{FK24, K10, N20, O94}. 
However, in order to accommodate more general situations, we aim to derive a new formula that further develops their results in some sense. 
Another purpose is to derive a better higher-order asymptotic formula for the multidimensional Laplace-type integral in which the phase function depends on both $\bx\in \Omega$ and $n\in \mathbb{N}$, since these integral is important from the application point of view.
Actually, such formulas have been derived in \cite{K20, L19, N20, O94}. As an application to probability theory, the rates of convergence of moment generating functions associated with the weak law of large numbers and the central limit theorem have been established in \cite{K20, L19}. 
In particular, the phase function of the form $h_{n}(\bx)=h(\bx)+\varepsilon_{n}\sigma(\bx)$ with $\varepsilon_{n} >0$ satisfying $\varepsilon_{n} \to 0$ as $n\to \infty$, has been treated by \L{}api\'{n}ski \cite{L19}, and our study is inspired by his work. 
Based on the above considerations, in our study, we analyzed the multidimensional Laplace-type integral with a perturbed phase function \eqref{eq:Laplace_integral} and succeeded in deriving a result on the higher-order asymptotic expansion for $I_{n}$, together with an error estimate, accomplishing the above objectives.
We note that the method used in this paper requires only basic calculus and elementary linear algebra. 
Therefore, one of the selling points of this paper is that it provides an analysis that is accessible not only to mathematicians but also to readers who are not specialized in mathematics. 

\medskip
\par\noindent
\textbf{\bf{Notations}} 
\begin{itemize}
\item 
In this paper, $\alpha = (\alpha_1,\dots, \alpha_d)\in \Z_{\ge 0}^d$ means a multi-index. 
Then, we define $\alpha! := \alpha_1 ! \cdots \alpha_d !$ and $|\alpha| := \alpha_1+\cdots+ \alpha_d$. 
In addition, for any vector $\bx=(x_1,\dots, x_d)\in \R^d$, we set $\bx^\alpha := x_1^{\alpha_1} \cdots x_d^{\alpha_d}$. 
Moreover, for any smooth function $f(\bx)$, $\partial^{\alpha} f$ is defined by $\displaystyle \partial^{\alpha} f(\bx):= \frac{\partial^{|\alpha|}f}{\partial x_1^{\alpha_1} \cdots \partial x_d^{\alpha_d}}(\bx)$. 
\item
For $f\in C^{2}(\Omega)$, the Hessian matrix of $f(\bx)$ is defined by $\displaystyle D^{2}f(\bx) :=\left( \frac{\partial^2 f}{\partial x_i \partial x_j} (\bx) \right)_{1\le i,j \le d}$. 
\item
For a matrix $A=(A_{ij})_{1\le i, j \le d}$, we use the Hilbert--Schmidt norm: $\displaystyle \|A\|_{2}:=\sqrt{\sum_{i=1}^{d}\sum_{j=1}^{d}|A_{ij}|^{2}}$. 
\item
Let $k\in \mathbb{Z}_{\ge0}$. Then, we define $C^{k}(\overline{\Omega})$ as the space of all functions $f : \overline{\Omega} \to \R$ such that $f \in C^{k}(\Omega)$ and $\partial^{\alpha}f$ are uniformly continuous on $\Omega$ for any multi-index $\alpha\in \Z_{\ge 0}^d$ satisfying $|\alpha|\le k$. 
By virtue of the uniform continuity, each $\partial^{\alpha}f$ admits a unique continuous extension to $\overline{\Omega}$. 
 In what follows, we identify $\partial^{\alpha} f$ with its continuous extension to $\overline{\Omega}$. 
 For such functions $\partial^{\alpha}f$, we sometimes take $\|\partial^{\alpha}f\|_{C^{0}(\overline{\Omega})}:=\sup_{\bx \in \overline{\Omega}}|\partial^{\alpha}f(\bx)|$ throughout this paper. 
\end{itemize}

\medskip
\par\noindent
\textbf{\bf{Main Result}} 

\smallskip
Before stating our main result, we shall introduce some assumptions on the functions appearing in the integral \eqref{eq:Laplace_integral}. 
First, let us discuss $h_{n}$, $h$ and $\varepsilon_{n}$. In what follows, assume that $h, \sigma \in C^{3}(\overline{\Omega})$ and the following conditions hold: 

\smallskip
\noindent
\underline{\bf Assumption (A).}
\begin{description}
\item[(i)]
For $p>1$, $\varepsilon_{n} >0$ satisfy $\varepsilon_{n}=O\left(n^{-p}\right)$ as $n\to \infty$. 
\item[(ii)]
$h: \overline{\Omega} \to \R$ has a maximum only at $\bx = \bc \in \Omega$ and satisfies $\det D^{2}h(\bc) \neq 0$. 
\item[(iii)]
$h_{n}: \overline{\Omega} \to \R$ has a maximum only at $\bx = \bc_{n} \in \Omega$ and satisfy $\det D^{2}h_{n}(\bc_{n}) \neq 0$ for all $n\in \mathbb{N}$. 
\item[(iv)]
There exists $N_{0} \in \mathbb{N}$ such that 
\begin{equation}\label{D2-bound}
\inf_{n\ge N_{0}, \, x\in \overline{\Omega}}\left|\det D^{2}h_{n}(\bx)\right|>0. 
\end{equation}
\item[(v)]
There exists $N_{1} \in \mathbb{N}$ such that 
\begin{equation}\label{unif-bound}
\sup_{n\ge N_{1}}\left\|\partial^{\alpha}h_{n}\right\|_{C^{0}(\overline{\Omega})}<\infty 
\end{equation}
holds for any multi-index $\alpha \in \Z_{\ge0}^d$ satisfying $|\alpha|=2, 3$. 
\item[(vi)]
Let $\lambda_{1,n}(\bc_{n}), \cdots, \lambda_{d,n}(\bc_{n})$ be the eigenvalues of $D^{2}h_{n}(\bc_{n})$. Then, there exists $N_{2} \in \mathbb{N}$ such that 
\begin{equation}\label{assump-bound}
C_{\dag}:=\inf_{n\ge N_{2}}\sum_{i=1}^{d}\left|\lambda_{i,n}(c_{n})\right|^{2}>0. 
\end{equation}
\end{description}

Next, we shall explain the assumptions on $g$ as follows: 

\smallskip
\noindent
\underline{\bf Assumption (B).}

\smallskip
\noindent
Let $k \in 2\Z_{\ge0}$ and $g\in C^{k+1}(\overline{\Omega})$. In addition, if $k\ge1$, assume that $g$ satisfies $\partial^{\alpha}g(\bc)=0$ for any multi-index $\alpha \in \Z_{\ge0}^d$ with $|\alpha|\le k-1$ and $\partial^{\alpha}g(\bc)\neq 0$ for some multi-index $\alpha  \in (2\Z_{\ge0})^d$ with $|\alpha|=k$. On the other hand, we also assume $g(\bc) \neq 0$ in the case of $k=0$. 

\medskip
Now, we would like to state our main result on the asymptotic behavior of $I_{n}$ as $n\to \infty$:  
\begin{thm}\label{thm:main}
{\it 
Let $\Omega$ be a bounded domain in $\R^d$ with $d \in \N$. 
Suppose that $h_{n} : \overline{\Omega} \to \R$ are given by the form $h_{n}(\bx)=h(\bx)+\varepsilon_{n}\sigma(\bx)$ for all $n\in \mathbb{N}$, where $\varepsilon_{n} >0$ and $h, \sigma : \overline{\Omega} \to \R$ are satisfy $h, \sigma \in C^{3}(\overline{\Omega})$. 
In addition, assume that $h_{n}$, $h$ and $\varepsilon_{n}$ satisfy the assumption {\bf (A)}. 
Moreover, suppose that the assumption {\bf (B)} is satisfied for $g : \overline{\Omega} \to \R$. Then, $I_{n}$ defined in \eqref{eq:Laplace_integral} satisfies the following asymptotic formula:
\begin{align}\label{asymp}
I_{n}=e^{nh(\bc)} \left( n^{-\frac{d}{2}-\frac{k}{2}}\sqrt{\frac{(2\pi)^d}{|\det D^{2}h(\bc)|}}\hspace{-5mm} \sum_{\substack{|\beta|=k\\ \beta=(\beta_1,\dots, \beta_d) \in (2\Z_{\ge0})^d}} \hspace{-5mm} \left(\partial^\beta g(\bc) \right)\prod_{i=1}^d \frac{|\lambda_i(\bc)|^{-\frac{\beta_i}{2}}}{\beta_i !!}+ O\left(n^{-q}\right)\right)
\end{align}
as $n\to \infty$, where $\lambda_{1}(\bc), \cdots, \lambda_{d}(\bc)$ are the eigenvalues of $D^{2}h(\bc)$ and $\beta=(\beta_1,\dots, \beta_d) \in (2\Z_{\ge0})^d$ is a multi-index, while the exponent $q=q(p, d, k)>0$ is defined by
\begin{equation}
q:=\begin{cases}
\displaystyle \frac{d}{2}+\frac{k}{2}+p-1, &\text{if} \quad \displaystyle 1<p<\frac{3}{2}, \\[1em]
\displaystyle \frac{d}{2}+\frac{k}{2}+\frac{1}{2}, &\text{if} \quad \displaystyle p\ge \frac{3}{2}. 
\end{cases}
\end{equation} 
}
\end{thm}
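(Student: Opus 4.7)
The plan is to apply Laplace's method at the shifted critical point $\bc_n$ of $h_n$ and then transfer the resulting formula back to $\bc$ via a perturbation analysis. Using Assumption (A)(ii)--(v), there exist $\delta,\eta>0$ independent of $n$ such that $\bc_n \in B_{\delta/2}(\bc)$ for $n$ large and $h_n(\bx) \le h_n(\bc_n)-\eta$ on $\Omega \setminus B_\delta(\bc_n)$; hence the contribution of $\Omega \setminus B_\delta(\bc_n)$ to $I_n$ is exponentially smaller than any polynomial in $n$ and is absorbed into the error. Inside $B_\delta(\bc_n)$ I substitute $\bx = \bc_n + \by/\sqrt{n}$ and expand
\begin{equation*}
nh_n(\bx) = nh_n(\bc_n) + \tfrac{1}{2}\by^T H_n\by + nR_{3,n}(\by/\sqrt{n}),
\end{equation*}
where $H_n = D^2h_n(\bc_n)$ is negative-definite and $|nR_{3,n}(\by/\sqrt{n})| \le C|\by|^3/\sqrt{n}$ uniformly in $n$ by (A)(v). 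Shrinking $\delta$ if necessary, I linearize $e^{nR_{3,n}(\by/\sqrt{n})} = 1 + O(|\by|^3/\sqrt{n})$ on the rescaled localized region.

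Since $\partial^\alpha g(\bc)=0$ for $|\alpha| \le k-1$ by Assumption (B), Taylor's theorem at $\bc$ gives $g(\bx) = \sum_{|\beta|=k}\frac{1}{\beta!}\partial^\beta g(\bc)(\bx-\bc)^\beta + O(|\bx-\bc|^{k+1})$. Writing $\bx-\bc = (\bc_n-\bc) + \by/\sqrt{n}$ and using the bound $|\bc_n-\bc| = O(\varepsilon_n)$, which follows from $\nabla h(\bc_n) = -\varepsilon_n \nabla \sigma(\bc_n)$ together with $\det D^2h(\bc)\ne 0$, the multinomial expansion of $(\bx-\bc)^\beta$ produces a leading $n^{-k/2}\by^\beta$ and cross-terms of order $\varepsilon_n^j n^{-(k-j)/2}$, all strictly subleading. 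After diagonalizing $H_n$ by an orthogonal transformation (or equivalently choosing coordinates in which $D^2h(\bc)$ is diagonal), the standard Gaussian moment formulas yield, for $\beta \in (2\Z_{\ge 0})^d$,
\begin{equation*}
\int_{\R^d} e^{\tfrac{1}{2}\by^T H_n \by}\by^\beta\, d\by = \sqrt{\tfrac{(2\pi)^d}{|\det H_n|}}\prod_{i=1}^d |\lambda_{i,n}(\bc_n)|^{-\beta_i/2}(\beta_i - 1)!!,
\end{equation*}
and zero otherwise; the tail truncation from $B_{\delta\sqrt{n}}(0)$ to $\R^d$ is exponentially small thanks to (A)(vi). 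Applying the identity $(\beta_i-1)!!/\beta_i! = 1/\beta_i!!$ for even $\beta_i$ reproduces the main coefficient in \eqref{asymp}, but with $\bc_n,\lambda_{i,n}(\bc_n),\det H_n$ in place of $\bc,\lambda_i(\bc),\det D^2h(\bc)$.

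It then remains to propagate the perturbations back to quantities at $\bc$. Since $\bc$ is a critical point of $h$, $h(\bc_n)-h(\bc) = O(\varepsilon_n^2)$, yielding $e^{nh_n(\bc_n)}/e^{nh(\bc)} = 1 + O(n\varepsilon_n) = 1 + O(n^{1-p})$ by virtue of $p>1$. Weyl's eigenvalue inequality applied to $\|H_n - D^2h(\bc)\|_2 = O(\varepsilon_n)$ gives $\lambda_{i,n}(\bc_n) = \lambda_i(\bc) + O(\varepsilon_n)$, while continuity of the determinant together with (A)(iv) gives $|\det H_n|^{-1/2} = |\det D^2h(\bc)|^{-1/2}(1+O(\varepsilon_n))$. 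Collecting the contributions: the cubic remainder in $h_n$ and the $(k+1)$-th remainder in $g$ each produce $O(n^{-1/2})$ relative error; the prefactor contributes $O(n^{1-p})$; and the perturbations of $\bc_n$, $\lambda_{i,n}$, $\det H_n$ contribute only $O(\varepsilon_n)$ or better. The dominant relative error is therefore $\max(n^{-1/2},n^{1-p})$, which multiplied by the overall scaling $n^{-d/2-k/2}$ equals precisely $n^{-q}$ with the piecewise exponent in the statement.

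The main obstacle is the bookkeeping of cross-terms arising from the multinomial expansion of $((\bc_n-\bc)+\by/\sqrt{n})^\beta$ combined with the cubic correction $e^{nR_{3,n}}$: each such cross-term must be checked against the parity of Gaussian moments, and only after showing that the odd contributions vanish do all would-be dominant corrections fit within the claimed error exponent. A secondary delicate point is coordinating the localization radius $\delta$ with the uniform bounds in (A)(iv)--(vi) so that the linearization $e^{nR_{3,n}}\approx 1+nR_{3,n}$, the Gaussian-tail truncation, and the eigenvalue perturbation argument all hold simultaneously with $n$-independent constants.
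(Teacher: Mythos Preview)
Your proposal is correct and follows essentially the same route as the paper: localize near the maximum, rescale by $\sqrt{n}$, Taylor-expand $h_n$ to third order and $g$ to order $k+1$, evaluate Gaussian moments, and then use Weyl-type perturbation estimates together with $|\bc_n-\bc|=O(\varepsilon_n)$ to pass from $(\bc_n,H_n,\lambda_{i,n})$ back to $(\bc,D^2h(\bc),\lambda_i)$, arriving at the dominant relative error $\max(n^{-1/2},n^{1-p})$. The only cosmetic difference is that you center the substitution at $\bc_n$ (so the quadratic form is exactly $\tfrac12\by^TH_n\by$ and the shift $(\bc_n-\bc)+\by/\sqrt{n}$ appears in the $g$-expansion), whereas the paper centers at $\bc$ (so the $g$-part is clean and the shifts $(\by-\varepsilon_n\sqrt{n}\,\ba_n)^\alpha$ appear in the quadratic and cubic parts of $h_n$); these are dual bookkeepings of the same computation.
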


\begin{rem}
{\it 
We note that our result is also valid for $\sigma(\bx)\equiv 0$. 
In this case, the assumption {\bf (B)} and only the condition {\bf (ii)} in the assumption {\bf (A)} are required to prove Theorem {\rm \ref{thm:main}}, and the other conditions are not needed. 
Under this situation, the exponent $p>1$ in the condition {\bf (i)} is interpreted as $p=\infty$. 
Therefore, the error bound in \eqref{asymp} is $O(n^{-\frac{d}{2}-\frac{k}{2}-\frac{1}{2}})$, which is the same result as for $p\ge3/2$.
The fact that the same results are obtained when $p\ge3/2$ as when $\sigma(\bx)\equiv 0$ means that $\varepsilon_{n}\sigma(\bx)$ can be considered as perturbations if $p\ge3/2$. 
On the other hand, in the case of $1<p<3/2$, although $\varepsilon_{n} \to 0$, the term $\varepsilon_{n}\sigma(\bx)$ cannot be regarded as perturbations. 
Moreover, it should be noted that the decay order of $\varepsilon_{n}$ has a non-negligible effect on the asymptotic behavior. 
Furthermore, in the case of $0<p<1$, it does not even have meaning as an asymptotic formula, since the error bound $O(n^{-\frac{d}{2}-\frac{k}{2}-(p-1)})$ decays slower than the leading term.
}
\end{rem}

\begin{rem}
{\it 
Let $\sigma(\bx)\equiv 0$. If $k=0$, our asymptotic profile given in \eqref{asymp} is consistent with the classical result \eqref{thm-Laplace-1}. 
Moreover, for the one-dimensional case $d=1$, our formula is a directly follows from the results given in {\rm \cite{FK24, N20, O94, W19, WU24}}. 
On the other hand, we emphasize that our formula is an extension to the multidimensional version of the one-dimensional result by them. 
In addition, another multidimensional formula with an error estimate is also given by Kirwin {\rm \cite{K10}}. 
He gave another asymptotic profile with an error bound $O(n^{-\frac{d}{2}-\frac{k}{2}+1})$ under some assumptions different from ours. On the other hand, compared to his result, our result \eqref{asymp} gives an improved error bound $O(n^{-\frac{d}{2}-\frac{k}{2}-\frac{1}{2}})$. 
Moreover, a result related to {\rm \cite{K10}} is also obtained in {\rm \cite{KP}}. However, the expression of the remainder term is different from {\rm \cite{K10}} and ours.
}
\end{rem}

\begin{rem}
{\it 
As we mentioned in the above, \L{}api\'{n}ski {\rm \cite{L19}} studied the Laplace-type integral where the phase function depending on $n\in \mathbb{N}$ and obtained an asymptotic formula with an error estimate, under appropriate assumptions (see, also {\rm \cite{K20}}). 
Our result can be regarded as a higher-order asymptotic expansion version of his result. 
However, it should be noted that the assumptions and methods used in the proof are different. 
In addition, we would like to emphasize that our result successfully captures the effect of the decay rate of the perturbations $\varepsilon_{n}=O\left(n^{-p}\right)$ on the error bound $O\left(n^{-q}\right)$ in the asymptotic formula.
}
\end{rem}

\section{Matrix Analysis under Perturbations}

\indent

In this section, we prepare results on matrix analysis under perturbations to prove the main result. More precisely, for the Hessian matrices $D^{2}h_{n}(\bc_{n})$ and $D^{2}h(\bc)$, we investigate the relationship between their determinants $\det D^{2}h_{n}(\bc_{n})$ and $\det D^{2}h(\bc)$, as well as the relationship between their eigenvalues $\lambda_{i,n}(\bc_{n})$ and $\lambda_{i}(\bc)$. 
In order to do that, let us introduce the following lemma on an approximation for $\bc_{n}$ of the maximum point of $h_{n}$: 
\begin{lem}\label{lem:cn-c}
{\it
Let $\Omega$ be a bounded domain in $\R^d$ with $d \in \N$. 
Suppose that $h_{n} : \overline{\Omega} \to \R$ are given by the form $h_{n}(\bx)=h(\bx)+\varepsilon_{n}\sigma(\bx)$ for all $n\in \mathbb{N}$, where $\varepsilon_{n} >0$ and $h, \sigma : \overline{\Omega} \to \R$ are satisfy $h, \sigma \in C^{2}(\overline{\Omega})$. 
Assume that $h_{n}$, $h$ and $\varepsilon_{n}$ satisfy the conditions {\bf (i)}, {\bf (ii)}, {\bf (iii)} and {\bf (iv)} in the assumption {\bf (A)}, and \eqref{unif-bound} holds for only $|\alpha|=2$. Then, we have 
\begin{equation}\label{cn-c}
\left|\bc_{n}-\bc\right|=O\left(n^{-p}\right) \ \ \text{as} \ \ n\to \infty. 
\end{equation}
}
\end{lem}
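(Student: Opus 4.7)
The strategy has two stages: first, establish qualitative convergence $\bc_n \to \bc$, then upgrade this to the quantitative rate $O(n^{-p})$ by combining the first-order optimality conditions $\nabla h_n(\bc_n)=0$ and $\nabla h(\bc)=0$ (available because both maxima are interior by (ii)--(iii)) with a mean-value expansion of $\nabla h$ and the invertibility of $D^{2}h(\bc)$ coming from (ii).

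For the first stage, I would use a Bolzano--Weierstrass/uniqueness argument. Since $\Omega$ is bounded, every subsequence of $\{\bc_n\}$ admits a sub-subsequence $\bc_{n_k}$ converging to some $\bc^{\ast}\in\overline{\Omega}$. Because $h_n-h=\varepsilon_n\sigma$ with $\sigma\in C^0(\overline{\Omega})$ and $\varepsilon_n\to 0$, we have $h_n\to h$ uniformly on $\overline{\Omega}$. For any $\bx\in\overline{\Omega}$ the inequality $h_{n_k}(\bc_{n_k})\ge h_{n_k}(\bx)$ combined with uniform convergence yields $h(\bc^{\ast})\ge h(\bx)$, so $\bc^{\ast}$ is a global maximizer of $h$. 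Uniqueness of the maximizer in condition (ii) then forces $\bc^{\ast}=\bc$; as every subsequence has a sub-subsequence converging to the same limit, the full sequence satisfies $\bc_n\to\bc$.

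For the quantitative stage, condition (iii) gives $\bc_n\in\Omega$, hence $\nabla h_n(\bc_n)=0$, and (ii) gives $\nabla h(\bc)=0$. Writing the gradient decomposition
\begin{equation*}
0=\nabla h_n(\bc_n)=\nabla h(\bc_n)+\varepsilon_n\nabla\sigma(\bc_n),
\end{equation*}
and applying the integral form of the mean value theorem component-wise to $\nabla h$ between $\bc$ and $\bc_n$, I obtain $\nabla h(\bc_n)-\nabla h(\bc)=M_n(\bc_n-\bc)$ with
\begin{equation*}
M_n:=\int_0^1 D^{2}h\bigl(\bc+t(\bc_n-\bc)\bigr)\,\dd t.
\end{equation*}
Since $h\in C^{2}(\overline{\Omega})$, the continuity of $D^{2}h$ together with $\bc_n\to\bc$ yields $M_n\to D^{2}h(\bc)$ in the Hilbert--Schmidt norm; by (ii), $D^{2}h(\bc)$ is invertible, so for all sufficiently large $n$ the matrix $M_n$ is invertible with $\|M_n^{-1}\|_2$ bounded uniformly in $n$. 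Combining the displayed equations gives $\bc_n-\bc=-\varepsilon_n\,M_n^{-1}\nabla\sigma(\bc_n)$, and using $\nabla\sigma\in C^{0}(\overline{\Omega})$ (hence bounded) together with $\varepsilon_n=O(n^{-p})$ from (i) delivers the claim $|\bc_n-\bc|=O(n^{-p})$.

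The main obstacle is ensuring that the mean-value matrix $M_n$ inherits uniform invertibility; this is where the qualitative convergence $\bc_n\to\bc$ from the first stage is essential, since it localizes the integration segment inside a neighborhood of $\bc$ on which $D^{2}h$ stays close to $D^{2}h(\bc)$. Once this is secured, the rate $O(n^{-p})$ is immediate from the perturbation of size $\varepsilon_n$ on the right-hand side.
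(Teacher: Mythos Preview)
Your argument is correct, but it follows a genuinely different route from the paper's. The paper expands $\nabla h_{n}$ (rather than $\nabla h$) around $\bc$ via Taylor's theorem, obtaining
\[
D^{2}h_{n}\bigl(\bc+\theta(\bc_{n}-\bc)\bigr)(\bc_{n}-\bc)=-\nabla h_{n}(\bc)=-\varepsilon_{n}\nabla\sigma(\bc),
\]
and then inverts $D^{2}h_{n}$ at the intermediate point by appealing directly to condition (iv), the uniform lower bound on $|\det D^{2}h_{n}(\bx)|$ over all of $\overline{\Omega}$, together with the $|\alpha|=2$ case of \eqref{unif-bound} to bound the adjugate. This bypasses any need for a preliminary convergence step: invertibility and uniform control of the inverse are available globally from the hypotheses, so the rate $O(n^{-p})$ drops out in one move. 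The resulting formula $\bc_{n}-\bc=\varepsilon_{n}\ba_{n}$, with $\ba_{n}$ written explicitly via the adjugate, is then reused throughout the proof of the main theorem.

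Your approach instead expands $\nabla h$, so the matrix you must invert is $M_{n}=\int_{0}^{1}D^{2}h(\bc+t(\bc_{n}-\bc))\,\dd t$, whose invertibility you know only near $\bc$ from condition (ii). This forces the extra compactness stage to first localize $\bc_{n}$ near $\bc$. The trade-off is that your argument never actually uses conditions (iv) or \eqref{unif-bound}; it runs under strictly weaker hypotheses, at the cost of the two-stage structure. One minor point worth making explicit in your write-up: the segment from $\bc$ to $\bc_{n}$ must lie in $\Omega$ for the integral mean-value formula to apply, and since $\Omega$ need not be convex this is only guaranteed once $\bc_{n}$ is close enough to the interior point $\bc$ --- which is precisely what your first stage provides.
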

\begin{proof}
First, applying Taylor's theorem to $\nabla h_{n}$ around $\bc$, there exists $\theta=\theta(\bx, \bc, n) \in (0, 1)$ such that 
\[
\nabla h_{n}(\bx)=\nabla h_{n}(\bc)+D^{2}h_{n}(\bc+\theta(\bx-\bc))(\bx-\bc). 
\]
From the condition {\bf (iii)}, we get $\nabla h_{n}(\bc_{n})=\bm{0}$. 
Therefore, substituting $\bx=\bc_{n}$ into the above, we obtain 
\begin{equation*}
D^{2}h_{n}(\bc+\theta(\bc_{n}-\bc))(\bc_{n}-\bc)=-\nabla h_{n}(\bc) \quad \text{for some} \quad \theta=\theta(\bc_{n}, \bc, n) \in (0, 1).
\end{equation*}
On the other hand, since $\nabla h_{n}(\bx)=\nabla h(\bx)+\varepsilon_{n}\nabla \sigma(\bx)$ and $\nabla h(\bc)=\bm{0}$ from the condition {\bf (ii)}, we have
\begin{equation*}
\nabla h_{n}(\bc)=\nabla h(\bc)+\varepsilon_{n}\nabla \sigma(\bc)=\varepsilon_{n}\nabla \sigma(\bc). 
\end{equation*}
Thus, combining the above two results and taking the inverse matrix of $D^{2}h_{n}(\bc+\theta(\bc_{n}-\bc))$, we get 
\begin{align}
\bc_{n}-\bc&=-\varepsilon_{n}\left\{D^{2}h_{n}(\bc+\theta(\bc_{n}-\bc))\right\}^{-1}\nabla \sigma(\bc) \nonumber \\
&=-\varepsilon_{n}\frac{\text{adj}\left(D^{2}h_{n}(\bc+\theta(\bc_{n}-\bc))\right)}{\det D^{2}h_{n}(\bc+\theta(\bc_{n}-\bc))}\nabla \sigma(\bc)  
=:\varepsilon_{n}\ba_{n}=\varepsilon_{n}(a_{n}^{(1)}, \cdots, a_{n}^{(d)}), \label{DEF-R}
\end{align}
where $\text{adj}\left(D^{2}h_{n}(\bx)\right)$ denotes the adjugate matrix of $D^{2}h_{n}(\bx)$. 
(The above notation $\ba_{n}$ will be used in the proof of Theorem \ref{thm:main}). Therefore, the desired result \eqref{cn-c} can be derived by the generalized Cauchy--Schwarz inequality. Actually, if $n\in \mathbb{N}$ is sufficiently large, it follows from \eqref{D2-bound} and \eqref{unif-bound} that 
\begin{equation}\label{R-est}
\left|\ba_{n}\right| \le \frac{\displaystyle \sup_{n\ge N_{1}, \, x\in \Omega}\left\|\text{adj}\left(D^{2}h_{n}(\bx)\right)\right\|_{2}}{\displaystyle \inf_{n\ge N_{0}, \, x\in \Omega}\left|\det D^{2}h_{n}(\bx)\right|} \left\|\nabla \sigma\right\|_{C^{0}(\overline{\Omega})}
=O\left(1\right),  
\end{equation}
where we implicitly used the fact that the elements of the adjugate matrix of $D^{2}h_{n}(\bx)$ consist of products or combinations of second derivatives of $h_{n}(\bx)$. 
By summarizing \eqref{DEF-R}, \eqref{R-est} and the condition {\bf (i)}, the proof of \eqref{cn-c} is complete. 
\end{proof}

Next, we shall derive a result on the approximation of the determinant. 
The following proposition shows that the asymptotic profile of $\det D^{2}h_{n}(\bc_{n})$ can be given by $\det D^{2}h(\bc)$ as $n\to \infty$: 
\begin{prop}\label{lem:det}
{\it 
Let $\Omega$ be a bounded domain in $\R^d$ with $d \in \N$. 
Suppose that $h_{n} : \overline{\Omega} \to \R$ are given by the form $h_{n}(\bx)=h(\bx)+\varepsilon_{n}\sigma(\bx)$ for all $n\in \mathbb{N}$, where $\varepsilon_{n} >0$ and $h, \sigma : \overline{\Omega} \to \R$ are satisfy $h, \sigma \in C^{2}(\overline{\Omega})$. 
Assume that $h_{n}$, $h$ and $\varepsilon_{n}$ satisfy the conditions {\bf (i)}, {\bf (ii)}, {\bf (iii)} and {\bf (iv)} in the assumption {\bf (A)}, and \eqref{unif-bound} holds for only $|\alpha|=2$. Then, we have 
\begin{equation}\label{det-expan-prop}
\det D^{2}h_{n}(\bc_{n})=\det D^{2}h(\bc)+O\left(n^{-p}\right),  \ \ \text{as} \ \ n\to \infty. 
\end{equation}
}
\end{prop}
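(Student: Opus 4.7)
The plan is to estimate the matrix difference $D^{2}h_{n}(\bc_{n})-D^{2}h(\bc)$ in Hilbert--Schmidt norm by $O(n^{-p})$, and then to transfer this bound to the determinants via the elementary Leibniz expansion, exploiting the uniform boundedness of the matrix entries guaranteed by \eqref{unif-bound}.

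First I would insert $D^{2}h(\bc_{n})$ and split the matrix difference as
\begin{equation*}
D^{2}h_{n}(\bc_{n})-D^{2}h(\bc)=\bigl[D^{2}h(\bc_{n})-D^{2}h(\bc)\bigr]+\varepsilon_{n}D^{2}\sigma(\bc_{n}).
\end{equation*}
The second summand is dominated by $\varepsilon_{n}\|D^{2}\sigma\|_{C^{0}(\overline{\Omega})}$, which is $O(n^{-p})$ by condition \textbf{(i)} together with the smoothness of $\sigma$. For the first summand, Lemma \ref{lem:cn-c} gives $|\bc_{n}-\bc|=O(n^{-p})$, so an entry-wise mean value inequality applied to each $(\partial^{\alpha}h)(\bc_{n})-(\partial^{\alpha}h)(\bc)$ with $|\alpha|=2$ yields $\|D^{2}h(\bc_{n})-D^{2}h(\bc)\|_{2}=O(|\bc_{n}-\bc|)=O(n^{-p})$. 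Adding these two estimates gives the desired matrix bound.

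To pass from the matrix bound to the determinant bound, I would invoke the Leibniz formula
\begin{equation*}
\det M=\sum_{\tau\in S_{d}}\mathrm{sgn}(\tau)\prod_{i=1}^{d}M_{i,\tau(i)}.
\end{equation*}
Since both $D^{2}h_{n}(\bc_{n})$ (by \eqref{unif-bound}) and $D^{2}h(\bc)$ have uniformly bounded entries, the standard telescoping identity $\prod_{i}a_{i}-\prod_{i}b_{i}=\sum_{j}\bigl(\prod_{k<j}a_{k}\bigr)(a_{j}-b_{j})\bigl(\prod_{k>j}b_{k}\bigr)$ applied to each permutation summand shows that $M\mapsto \det M$ is Lipschitz on this bounded entry set, so
\begin{equation*}
\bigl|\det D^{2}h_{n}(\bc_{n})-\det D^{2}h(\bc)\bigr|\le C\,\|D^{2}h_{n}(\bc_{n})-D^{2}h(\bc)\|_{2}=O(n^{-p}),
\end{equation*}
which is \eqref{det-expan-prop}.

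I expect the only subtle point to be the Lipschitz-type control of $D^{2}h$ used above. Under $h\in C^{2}(\overline{\Omega})$ alone, uniform continuity on the compact domain $\overline{\Omega}$ yields merely $\|D^{2}h(\bc_{n})-D^{2}h(\bc)\|_{2}=o(1)$, which is too weak for the advertised $O(n^{-p})$ rate. The argument therefore uses implicitly that $D^{2}h$ is Lipschitz, i.e.\ effectively $h\in C^{3}(\overline{\Omega})$; this stronger regularity is supplied by the global assumption of Theorem \ref{thm:main}, so the proposition is adequate for its intended use inside the main proof.
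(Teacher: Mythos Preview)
Your approach is essentially the same as the paper's: both control the entrywise difference $D^{2}h_{n}(\bc_{n})-D^{2}h(\bc)$ by $O(n^{-p})$ (the paper writes each entry as $\partial^{2}_{ij}h(\bc)+\varepsilon_{n}f_{ij}$ with $f_{ij}=O(1)$ via a first-order Taylor expansion of $\partial^{2}_{ij}h$, then expands the Leibniz product multinomially; you do the equivalent telescoping). Your remark about the regularity gap is exactly right and applies equally to the paper's own proof, which invokes $\frac{\partial^{2}}{\partial x_{i}\partial x_{j}}\nabla h$ in \eqref{det-expan-junbi} despite the proposition's stated $C^{2}$ hypothesis; as you note, the ambient $C^{3}$ assumption of Theorem~\ref{thm:main} is what actually justifies this step.
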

\begin{proof}
From the definition of $h_{n}$, \eqref{DEF-R} and Taylor's theorem, there exists $\theta=\theta(\bc, n)\in (0, 1)$ such that 
\begin{align}
\left(D^{2}h_{n}(\bc_{n})\right)_{ij}
&=\frac{\partial^{2}h_{n}}{\partial x_{i} \partial x_{j}}(\bc_{n})
=\frac{\partial^{2}h_{n}}{\partial x_{i} \partial x_{j}}(\bc+\varepsilon_{n} \ba_{n})
=\frac{\partial^{2}h}{\partial x_{i} \partial x_{j}}(\bc+\varepsilon_{n} \ba_{n})+\varepsilon_{n}\frac{\partial^{2}\sigma}{\partial x_{i} \partial x_{j}}(\bc+\varepsilon_{n} \ba_{n}) \nonumber \\
&=\frac{\partial^{2}h}{\partial x_{i} \partial x_{j}}(\bc)+\varepsilon_{n}\ba_{n}\cdot \frac{\partial^{2}}{\partial x_{i} \partial x_{j}}\nabla h(\bc+\theta\varepsilon_{n} \ba_{n})+\varepsilon_{n}\frac{\partial^{2}\sigma}{\partial x_{i} \partial x_{j}}(\bc+\varepsilon_{n} \ba_{n}) \nonumber \\
&=:\frac{\partial^{2}h}{\partial x_{i} \partial x_{j}}(\bc)+\varepsilon_{n}f_{ij}(\bc, n). \label{det-expan-junbi}
\end{align}
For $f_{ij}(\bc, n)$, we note that the following estimate can be easily obtained from \eqref{R-est}: 
\begin{equation}\label{fij-est}
\left|f_{ij}(\bc, n)\right| \le |\ba_{n}|\left\|\frac{\partial^{2}}{\partial x_{i} \partial x_{j}}\nabla h\right\|_{C^{0}(\overline{\Omega})}+\left\|\frac{\partial^{2}\sigma}{\partial x_{i} \partial x_{j}}\right\|_{C^{0}(\overline{\Omega})}=O(1). 
\end{equation}
Moreover, from \eqref{det-expan-junbi} and the definition of determinant, we have
\[
\det D^{2}h_{n}(\bc_{n})
=\sum_{\rho\in S_{d}} \mathrm{sgn}(\rho)\prod_{i=1}^{d}\left(\frac{\partial^{2}h}{\partial x_{i} \partial x_{\rho(i)}}(\bc)+\varepsilon_{n}f_{i\rho(i)}(\bc, n)\right), 
\]
where we denote the set of all permutations of $d$ elements by $S_{d}$. 
Furthermore, by carefully calculating the products of the above equation, we obtain
\begin{align*}
&\prod_{i=1}^{d}\left(\frac{\partial^{2}h}{\partial x_{i} \partial x_{\rho(i)}}(\bc)+\varepsilon_{n}f_{i\rho(i)}(\bc, n)\right) \\
&=\prod_{i=1}^{d}\frac{\partial^{2}h}{\partial x_{i} \partial x_{\rho(i)}}(\bc)
+\sum_{l=1}^{d}\sum_{1\le i_{1}<\cdots <i_{l}\le d}\varepsilon_{n}^{l}f_{i_{1}\rho(i_{1})}(\bc, n) \cdots f_{i_{l}\rho(i_{l})}(\bc, n)\prod_{j\neq i_{1}, \cdots, i_{l}}\frac{\partial^{2}h}{\partial x_{j} \partial x_{\rho(j)}}(\bc) \\
&=\prod_{i=1}^{d}\frac{\partial^{2}h}{\partial x_{i} \partial x_{\rho(i)}}(\bc)
+\varepsilon_{n}\sum_{l=1}^{d}\sum_{1\le i_{1}<\cdots <i_{l}\le d}\varepsilon_{n}^{l-1}f_{i_{1}\rho(i_{1})}(\bc, n) \cdots f_{i_{l}\rho(i_{l})}(\bc, n) \prod_{j\neq i_{1}, \cdots, i_{l}}\frac{\partial^{2}h}{\partial x_{j} \partial x_{\rho(j)}}(\bc) \\
&=:\prod_{i=1}^{d}\frac{\partial^{2}h}{\partial x_{i} \partial x_{\rho(i)}}(\bc)+\varepsilon_{n}F_{n, \rho}. 
\end{align*}
Thus, we arrive at the following expansion: 
\begin{align}
\det D^{2}h_{n}(\bc_{n})
&=\sum_{\rho\in S_{d}} \mathrm{sgn}(\rho)\left(\prod_{i=1}^{d}\frac{\partial^{2}h}{\partial x_{i} \partial x_{\rho(i)}}(\bc)+\varepsilon_{n}F_{n, \rho}\right) \nonumber \\
&=\det D^{2}h(\bc)+\varepsilon_{n}\sum_{\rho\in S_{d}} \mathrm{sgn}(\rho)F_{n, \rho} 
=:\det D^{2}h(\bc)+C_{n}.  \label{Cn-est}
\end{align}
(This notation $C_{n}$ will be used in the proof of Theorem \ref{thm:main}). 
From \eqref{fij-est} and the condition {\bf (i)}, $C_{n}\in \R$ can be evaluated as $C_{n}=O\left(n^{-p}\right)$. 
Therefore, we can conclude that the desired result \eqref{det-expan-prop} is true. 
\end{proof}

Finally, we shall introduce the following result concerning the approximation of the eigenvalues. 
The following proposition means that the eigenvalues $\lambda_{i,n}(\bc_{n})$ of $D^{2}h_{n}(\bc_{n})$ can be asymptotically expanded by the eigenvalues $\lambda_{i}(\bc)$ of $D^{2}h(\bc)$ as $n\to \infty$: 
\begin{prop}\label{lem:eigen}
{\it 
Let $\Omega$ be a bounded domain in $\R^d$ with $d \in \N$. 
Suppose that $h_{n} : \overline{\Omega} \to \R$ are given by the form $h_{n}(\bx)=h(\bx)+\varepsilon_{n}\sigma(\bx)$ for all $n\in \mathbb{N}$, where $\varepsilon_{n} >0$ and $h, \sigma : \overline{\Omega} \to \R$ are satisfy $h, \sigma \in C^{3}(\overline{\Omega})$. 
Assume that $h_{n}$, $h$ and $\varepsilon_{n}$ satisfy the conditions {\bf (i)}, {\bf (ii)}, {\bf (iii)} and {\bf (iv)} in the assumption {\bf (A)}, and \eqref{unif-bound} holds for only $|\alpha|=2$. Then, we have 
\begin{equation}\label{eigen}
\lambda_{i,n}(\bc_{n})=\lambda_{i}(\bc)+O\left(n^{-p}\right) \ \ \text{as} \ \ n\to \infty, 
\end{equation}
where $\lambda_{i,n}(\bc_{n})$ and $\lambda_{i}(\bc)$ are the $i$-th eigenvalues of $D^{2}h_{n}(\bc_{n})$ and $D^{2}h(\bc)$, respectively. 
}
\end{prop}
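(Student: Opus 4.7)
The plan is to reduce the eigenvalue estimate to a standard matrix perturbation inequality, reusing the entry-wise Hessian expansion that was already established for Proposition~\ref{lem:det}.

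First I would recall the identity \eqref{det-expan-junbi} from the proof of Proposition~\ref{lem:det}, which is available because it only uses the hypotheses (i)--(iv) together with \eqref{unif-bound} at order $|\alpha|=2$. It gives, entrywise,
\begin{equation*}
\bigl(D^{2}h_{n}(\bc_{n})\bigr)_{ij} = \bigl(D^{2}h(\bc)\bigr)_{ij} + \varepsilon_{n} f_{ij}(\bc,n),
\end{equation*}
with $|f_{ij}(\bc,n)| = O(1)$ uniformly in $n$ by \eqref{fij-est}. Setting $E_{n} := D^{2}h_{n}(\bc_{n}) - D^{2}h(\bc)$ and summing the $d^{2}$ entries in the Hilbert--Schmidt norm, I conclude
\begin{equation*}
\|E_{n}\|_{2} = \varepsilon_{n}\sqrt{\sum_{i,j=1}^{d} |f_{ij}(\bc,n)|^{2}} = O(\varepsilon_{n}) = O(n^{-p}),
\end{equation*}
where the final equality uses the condition (i) of Assumption~(A).

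Next, since $h,\sigma\in C^{3}(\overline{\Omega})$, Schwarz's theorem on mixed partials ensures that both $D^{2}h_{n}(\bc_{n})$ and $D^{2}h(\bc)$ are real symmetric matrices. I would then invoke Weyl's perturbation inequality for Hermitian matrices: if $A$ and $B$ are symmetric with eigenvalues arranged in, say, nonincreasing order, then $|\lambda_{i}(A)-\lambda_{i}(B)| \le \|A-B\|_{\mathrm{op}}$. Combined with the elementary bound $\|M\|_{\mathrm{op}} \le \|M\|_{2}$ between the spectral and Hilbert--Schmidt norms, this gives
\begin{equation*}
|\lambda_{i,n}(\bc_{n}) - \lambda_{i}(\bc)| \le \|E_{n}\|_{\mathrm{op}} \le \|E_{n}\|_{2} = O(n^{-p}),
\end{equation*}
which is exactly \eqref{eigen}.

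The argument is essentially an application of two textbook facts, so there is no substantial obstacle; the only point requiring a brief comment is the labeling of eigenvalues. Weyl's inequality only guarantees the stated bound after a consistent ordering (e.g.\ nonincreasing) is fixed on both spectra, so the conclusion should be read with $\lambda_{i,n}(\bc_{n})$ and $\lambda_{i}(\bc)$ listed under the same ordering convention; in view of \eqref{assump-bound} all eigenvalues remain bounded away from each other's magnitudes only in a weak sense, but this does not affect Weyl's bound, which holds regardless of multiplicities.
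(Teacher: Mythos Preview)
Your proof is correct and uses the same core tool as the paper (Weyl's perturbation inequality together with $O(n^{-p})$ control on the Hessian entries), but the organization differs. The paper inserts an intermediate eigenvalue $\lambda_{i}(\bc_{n})$ of $D^{2}h(\bc_{n})$ and splits
\[
\lambda_{i,n}(\bc_{n})-\lambda_{i}(\bc)
=\bigl(\lambda_{i,n}(\bc_{n})-\lambda_{i}(\bc_{n})\bigr)+\bigl(\lambda_{i}(\bc_{n})-\lambda_{i}(\bc)\bigr),
\]
applying Weyl separately to each piece: the first via $D^{2}h_{n}(\bc_{n})=D^{2}h(\bc_{n})+\varepsilon_{n}D^{2}\sigma(\bc_{n})$, the second via a Taylor bound on $D^{2}h(\bc_{n})-D^{2}h(\bc)$ using Lemma~\ref{lem:cn-c}. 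Your route is more economical: by invoking the entrywise identity \eqref{det-expan-junbi} already obtained in Proposition~\ref{lem:det}, you bound $\|D^{2}h_{n}(\bc_{n})-D^{2}h(\bc)\|_{2}$ in one shot and apply Weyl only once. The paper's decomposition has the minor expository benefit of isolating the two sources of perturbation (the $\varepsilon_{n}\sigma$ term versus the drift $\bc_{n}\to\bc$), while your argument avoids repeating work and is shorter. Your closing remark about the eigenvalue ordering is a useful clarification that the paper leaves implicit.
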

\begin{proof}
Let us split the difference $\Lambda_{i,n}:=\lambda_{i, n}(\bc_{n})-\lambda_{i}(\bc)$ into the following two parts:
\begin{equation}\label{DEF-Lambda}
\Lambda_{i,n} =\left(\lambda_{i, n}(\bc_{n})-\lambda_{i}(\bc_{n}) \right)+ \left(\lambda_{i}(\bc_{n})-\lambda_{i}(\bc) \right), 
\end{equation}
where $\lambda_{i}(\bc_{n})$ is the $i$-th eigenvalue of $D^{2}h(\bc_{n})$. 
(The above notation $\Lambda_{i,n}$ will be used in the proof of Theorem \ref{thm:main}). First, from the definition of $h_{n}$, we have 
\begin{align*}
D^{2}h_{n}(\bc_{n})
&=\left(\frac{\partial^{2}h_{n}}{\partial x_{i} \partial x_{j}}(\bc_{n})\right)_{1\le i, j \le d}
=\left(\frac{\partial^{2}h}{\partial x_{i} \partial x_{j}}(\bc_{n})\right)_{1\le i, j \le d}+\left(\varepsilon_{n}\frac{\partial^{2}\sigma}{\partial x_{i} \partial x_{j}}(\bc_{n})\right)_{1\le i, j \le d}. 
\end{align*}
Then, since all of the above terms are symmetric matrices, it follows from Wely's perturbation theorem (cf. Corollary III.2.6 in \cite{B97}, see also \cite{GV00}) and the condition {\bf (i)} that 
\begin{align}
\left|\lambda_{i, n}(\bc_{n})-\lambda_{i}(\bc_{n})\right|
\le \varepsilon_{n}\left\|D^{2}\sigma(\bc_{n})\right\|_{2}
\le \varepsilon_{n}\sqrt{\sum_{i=1}^{d}\sum_{j=1}^{d} \left\|\frac{\partial^{2}\sigma}{\partial x_{i} \partial x_{j}}\right\|_{C^{0}(\overline{\Omega})}^{2}}
=O\left(n^{-p}\right). \label{eigen-lem-1}
\end{align}
Moreover, using Wely's perturbation theorem again, from Taylor's theorem and Lemma \ref{lem:cn-c}, there exists $\theta=\theta(\bc_{n}, \bc)\in (0, 1)$ such that  
\begin{align}
\left|\lambda_{i}(\bc_{n})-\lambda_{i}(\bc)\right|
&\le \left\|D^{2}h(\bc_{n})-D^{2}h(\bc)\right\|_{2} 
=\sqrt{   \sum_{i=1}^{d}\sum_{j=1}^{d}  \left| \frac{\partial^{2}h}{\partial x_{i} \partial x_{j}}(\bc_{n}) -\frac{\partial^{2}h}{\partial x_{i} \partial x_{j}}(\bc)  \right|^{2}} \nonumber \\
&=\sqrt{   \sum_{i=1}^{d}\sum_{j=1}^{d}  \left| \frac{\partial^{2}}{\partial x_{i} \partial x_{j}}\nabla h(\bc+\theta(\bc_{n}-\bc))(\bc_{n}-\bc) \right|^{2}} \nonumber \\
&\le \left|\bc_{n}-\bc\right|\sqrt{\sum_{i=1}^{d}\sum_{j=1}^{d}   \left\| \frac{\partial^{2}}{\partial x_{i} \partial x_{j}}\nabla h\right\|_{C^{0}(\overline{\Omega})}^{2}   } 
=O\left(n^{-p}\right). \label{eigen-lem-2}
\end{align}
Finally, combining \eqref{DEF-Lambda}, \eqref{eigen-lem-1} and \eqref{eigen-lem-2}, we obtain $\Lambda_{i, n}=O\left(n^{-p}\right)$. This completes the proof.  
\end{proof}

\section{Proof of the Main Result}

\indent

Before proving Theorem \ref{thm:main}, we would like to introduce a simple lemma which will be used in that proof. 
That is, the following basic result on the integration of the Gaussian function on $\R^d$ (we omit its proof, since it can be given by a standard calculation): 
\begin{lem}\label{lem:gaussian}
{\it
Let $\by=(y_1,\dots, y_d)\in \R^d$ and $A=(A_{ij})_{1\le i, j \le d}$ be a negative definite symmetric matrix. 
Then, for any multi-index $\beta=(\beta_1,\dots, \beta_d)\in (2\Z_{\ge0})^d$, the following formula holds: 
$$
\int_{\R^d} \exp\left(\frac{1}{2}\sum_{i=1}^d\sum_{j=1}^d A_{ij} y_iy_j \right) \by^\beta d\by 
=\prod_{i=1}^d \left( \frac{|\lambda_i|}{2}\right)^{-\frac{\beta_i+1}{2}} \Gamma\left( \frac{\beta_i +1}{2}\right), 
$$
where $\lambda_1,\dots, \lambda_d<0$ are the eigenvalues of $A$, while $\Gamma$ is the Gamma function. 
}
\end{lem}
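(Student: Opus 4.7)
The plan is to reduce the $d$-dimensional integral to a product of one-dimensional Gaussian moment integrals. First I would orthogonally diagonalize $A = PDP^{T}$ with $D = \mathrm{diag}(\lambda_{1}, \dots, \lambda_{d})$, and work in the eigenbasis of $A$ so that effectively $A_{ij} = \lambda_{i} \delta_{ij}$; this is consistent with the statement of the lemma, in which the right-hand side depends only on the eigenvalues, and it is the coordinate system in which this lemma will be applied in the proof of Theorem \ref{thm:main}. In that basis the exponent becomes $\frac{1}{2}\sum_{i=1}^{d} \lambda_{i} y_{i}^{2}$, and the integrand factorizes as $\prod_{i=1}^{d} e^{\lambda_{i} y_{i}^{2}/2} y_{i}^{\beta_{i}}$. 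By Fubini's theorem (absolute integrability follows from the negativity of each $\lambda_{i}$ together with the polynomial growth of $\by^{\beta}$), the $d$-dimensional integral splits into a product of one-dimensional integrals $I_{i} := \int_{\R} e^{\lambda_{i} y_{i}^{2}/2} y_{i}^{\beta_{i}} \, dy_{i}$ for $i = 1, \dots, d$.

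Next I would evaluate each $I_{i}$ in closed form. Since $\beta_{i} \in 2\Z_{\ge 0}$ is even and $\lambda_{i} < 0$, the integrand is an even function of $y_{i}$, so $I_{i} = 2\int_{0}^{\infty} e^{-|\lambda_{i}| y_{i}^{2}/2} y_{i}^{\beta_{i}} \, dy_{i}$. The substitution $t = |\lambda_{i}| y_{i}^{2}/2$, under which $y_{i} = (2t/|\lambda_{i}|)^{1/2}$ and $dy_{i} = (2|\lambda_{i}|)^{-1/2} t^{-1/2} \, dt$, converts this into a Gamma function integral and yields
\[
I_{i} = \left(\frac{|\lambda_{i}|}{2}\right)^{-(\beta_{i}+1)/2} \int_{0}^{\infty} e^{-t} t^{(\beta_{i}-1)/2} \, dt = \left(\frac{|\lambda_{i}|}{2}\right)^{-(\beta_{i}+1)/2} \Gamma\!\left(\frac{\beta_{i}+1}{2}\right).
\]
Multiplying these $d$ identities together then gives exactly the claimed product formula.

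This is essentially a textbook calculation and presents no real difficulty. The only point that requires a little care is the bookkeeping in the diagonalization step, in particular the implicit identification of eigenvalue indices with coordinate indices in the monomial $\by^{\beta}$; once the eigenbasis of $A$ is fixed as the working coordinate system, every subsequent manipulation is an elementary one-variable substitution, so I expect no serious obstacle.
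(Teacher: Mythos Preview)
Your approach is correct and is precisely the ``standard calculation'' the paper alludes to; in fact the paper explicitly omits the proof of this lemma, so there is nothing further to compare against. Your observation that the formula only makes literal sense once one fixes the eigenbasis of $A$ as the working coordinate system (so that $A_{ij}=\lambda_i\delta_{ij}$ and the index $i$ on $\lambda_i$ matches the index on $y_i^{\beta_i}$) is exactly right and is implicitly assumed throughout the paper's later use of the lemma.
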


Finally, we give the proof of our main result Theorem \ref{thm:main}:

\medskip
\noindent
{\bf \bf Proof of Theorem \ref{thm:main}.}

\smallskip
First of all, we note that $h_{n}\in C^{3}(\overline{\Omega})$ due to $h,\sigma\in C^{3}(\overline{\Omega})$. In addition, from the condition {\bf (iii)}, we get $\nabla h_{n}(\bc_{n})=0$. 
Therefore, applying Taylor's theorem to $h_{n}$, there exists $\theta_0=\theta_{0}(\bx, \bc_{n}, n)\in (0,1)$ such that
\begin{align}\label{eq:h}
h_{n}(\bx)=h_{n}(\bc_{n})  + \sum_{|\alpha|=2} \frac{1}{\alpha!} \partial^\alpha h_{n}(\bc_{n})(\bx-\bc_{n})^\alpha + \sum_{|\alpha|=3} \frac{1}{\alpha !} \partial^\alpha h_{n}(\bc_{n}+\theta_0(\bx-\bc_{n}))(\bx-\bc_{n})^\alpha.
\end{align}

In what follows, let us derive the asymptotic profile of $I_{n}$. 
Before doing that, we shall find the leading term of $I_{n}e^{-nh_{n}(\bc_{n})}$. 
Now, we take sufficiently small $\delta>0$. We start with dividing $I_{n}e^{-nh_{n}(\bc_{n})}$ into the following two integrals $J_n^{(1)}$ and $J_n^{(2)}$:
\begin{align}\label{J1+J2}
I_{n}e^{-nh_{n}(\bc_{n})}
= \left(\int_{\Omega \setminus B_\delta(\bc)} + \int_{B_\delta(\bc)} \right) e^{n(h_{n}(\bx)-h_{n}(\bc_{n}))}g(\bx)d\bx 
=: J_n^{(1)} + J_n^{(2)}, 
\end{align}
where $B_{\delta}(\bc)$ is defined by $B_{\delta}(\bc):= \{\bx \in \R^d: |\bx - \bc | < \delta\}$. 

First, let us deal with $J_n^{(1)}$ in \eqref{J1+J2}. In order to do that, we shall prepare some an auxiliary estimate. 
It follows from the definition of $h_{n}$ that 
\begin{align}\label{h-junbi-1}
h_{n}(\bx)-h_{n}(\bc_{n})=h(\bx)+\varepsilon_{n}\sigma(\bx)-h(\bc_{n})-\varepsilon_{n}\sigma(\bc_{n})
=h(\bx)-h(\bc_{n})+\varepsilon_{n}\left\{ \sigma(\bx) -\sigma(\bc_{n})\right\}. 
\end{align}
Applying Taylor's theorem to $h$ and $\sigma$, there exist $\theta_1=\theta_{1}(\bc_{n}, \bc, n)\in (0,1)$ and $\theta_2=\theta_{2}(\bc_{n}, \bc, n)\in (0,1)$, respectively, such that
\begin{align}
\begin{split}\label{h-junbi-2}
h(\bc_{n})=h(\bc)+\nabla h(\bc+\theta_{1}(\bc_{n}-\bc))(\bc_{n}-\bc), \quad  
\sigma(\bc_{n})=\sigma(\bc)+\nabla \sigma (\bc+\theta_{2}(\bc_{n}-\bc))(\bc_{n}-\bc). 
\end{split}
\end{align}
Then, combining \eqref{h-junbi-1} and \eqref{h-junbi-2}, we have
\begin{align}
n(h_{n}(\bx)-h_{n}(\bc_{n}))
&=n\bigl[h(\bx)-h(\bc)-\nabla h(\bc+\theta_{1}(\bc_{n}-\bc))(\bc_{n}-\bc) \nonumber \\
&\qquad +\varepsilon_{n}\left\{ \sigma(\bx)-\sigma(\bc)-\nabla \sigma (\bc+\theta_{2}(\bc_{n}-\bc))(\bc_{n}-\bc) \right\}\bigl] \nonumber \\
&=-n(h(\bc)-h(\bx))+n\varepsilon_{n}\left(\sigma(\bx)-\sigma(\bc)\right)+R_{n},  \label{J1-junbi}
\end{align}
where a new remainder term $R_{n}$ is defined by 
\[
R_{n}:=-n\nabla h(\bc+\theta_{1}(\bc_{n}-\bc))(\bc_{n}-\bc) 
-n\varepsilon_{n} \nabla \sigma (\bc+\theta_{2}(\bc_{n}-\bc))(\bc_{n}-\bc). 
\]
Note that the following error bound can be obtained from Lemma \ref{lem:cn-c} and the condition {\bf (i)}: 
\begin{equation}\label{new-R-est}
\left|R_{n}\right|\le n|\bc_{n}-\bc|\left\|\nabla h\right\|_{C^{0}(\overline{\Omega})}+n\varepsilon_{n}|\bc_{n}-\bc|\left\|\nabla \sigma \right\|_{C^{0}(\overline{\Omega})}
=O\left(n^{-(p-1)}\right). 
\end{equation}
Eventually, we would like to evaluate $J_n^{(1)}$ in \eqref{J1+J2}. If we put 
\[
A:= \min_{\bx \in \overline{\Omega} \setminus B_\delta(\bc)} \{h(\bc)-h(\bx)\} >0, 
\]
then, it can be easily obtained by virtue of \eqref{J1-junbi}, \eqref{new-R-est} and the fact $n\varepsilon_{n}=O\left(n^{-(p-1)}\right)$ as follows: 
\begin{align}
\left|J_n^{(1)}\right| 
&\le \int_{\Omega \setminus B_\delta(\bc)} e^{-n(h(\bc)-h(\bx))+n\varepsilon_{n}\left(\sigma(\bx)-\sigma(\bc)\right)+R_{n}}\left|g(\bx)\right| d\bx  \nonumber \\
&\le e^{-nA}\exp\left(2n\varepsilon_{n}\left\|\sigma\right\|_{C^{0}(\overline{\Omega})}+|R_{n}|\right)\int_{\Omega \setminus B_\delta(\bc)} |g(\bx)|d\bx 
= O\left(e^{-An}\right). \label{est-J2}
\end{align}

Next, let us treat $J_n^{(2)}$ in \eqref{J1+J2} and extract the leading term of $I_{n}e^{-nh_{n}(\bc_{n})}$ from this part. 
In what follows, let us take $n\in \mathbb{N}$ sufficiently large. Now, recalling the assumption {\bf (B)} and applying Taylor's theorem to $g$, then there exists $\theta_3=\theta_{3}(\bx, \bc, k)\in (0,1)$ such that
\begin{align}\label{eq:g}
g(\bx)= \sum_{|\beta|=k} \frac{1}{\beta!}\partial^\beta g(\bc) (\bx-\bc)^\beta +  \sum_{|\beta|=k+1} \frac{1}{\beta!}\partial^\beta g(\bc+\theta_3(\bx-\bc)) (\bx-\bc)^\beta. 
\end{align}
Then, it follows from \eqref{DEF-R}, \eqref{eq:h}, \eqref{eq:g} and the change of variable  $\bx-\bc= \by/\sqrt{n}$ that
\begin{align}
J_n^{(2)}
&= \int_{B_\delta(\bc)}\left\{ \sum_{|\beta|=k} \frac{1}{\beta!}\partial^\beta g(\bc) (\bx-\bc)^\beta +  \sum_{|\beta|=k+1} \frac{1}{\beta!}\partial^\beta g(\bc+\theta_3(\bx-\bc)) (\bx-\bc)^\beta \right\} \nonumber \\
&\ \ \ \times 
 \exp\left\{n\left( \sum_{|\alpha|=2} \frac{1}{\alpha !} \partial^\alpha h_{n}(\bc_{n})(\bx-\bc_{n})^\alpha +\sum_{|\alpha|=3} \frac{1}{\alpha !} \partial^\alpha h_{n}(\bc_{n}+\theta_0(\bx-\bc_{n}))(\bx-\bc_{n})^\alpha \right) \right\}d\bx \nonumber\\
&=n^{-\frac{d}{2}}\int_{B_{\delta\sqrt{n}}(0)} 
\left\{ \sum_{|\beta|=k} \frac{1}{\beta!}\partial^\beta g(\bc) \left(\frac{\by}{\sqrt{n}}\right)^\beta +  \sum_{|\beta|=k+1} \frac{1}{\beta!}\partial^\beta g\left(\bc+\theta_3\frac{\by}{\sqrt{n}}\right) \left(\frac{\by}{\sqrt{n}}\right)^\beta \right\} \nonumber\\
&\ \ \ \times  \exp \left\{n\sum_{|\alpha|=3} \frac{1}{\alpha !} \partial^\alpha h_{n}\left(\bc_{n}+\theta_0\frac{\by}{\sqrt{n}} -\theta_{0}\varepsilon_{n}\ba_{n}\right)\left(\frac{\by}{\sqrt{n}}-\varepsilon_{n}\ba_{n}\right)^\alpha \right\}  \nonumber \\
&\ \ \ \times \exp\left(\sum_{|\alpha|=2} \frac{1}{\alpha !} \partial^\alpha h_{n}(\bc_{n})\left(\by-\varepsilon_{n}\sqrt{n}\ba_{n}\right)^\alpha\right)d\by. \label{Jn2-junbi}
\end{align}
Here, we would like to analyze $\left(\by-\varepsilon_{n}\sqrt{n}\ba_{n}\right)^\alpha$ for $|\alpha|=2$, in the above. 
It can be divided into the two cases {\bf (1)} and {\bf (2)} below. 
Actually, the following results can be easily obtained: 

\smallskip
\noindent
{\bf (1)} If the $i$-th and $j$-th components are $1$ and the others are 0, i.e., $\alpha=(0, \cdots, 1, \cdots, 1, \cdots, 0)$, 
\begin{align*}
\left(\by-\varepsilon_{n}\sqrt{n}\ba_{n}\right)^\alpha
&=\left(y_{i}-\varepsilon_{n}\sqrt{n}a_{n}^{(i)}\right)\left(y_{j}-\varepsilon_{n}\sqrt{n}a_{n}^{(j)}\right) \\
&=y_{i}y_{j}-\varepsilon_{n}\sqrt{n}a_{n}^{(i)}y_{j}-\varepsilon_{n}\sqrt{n}a_{n}^{(j)}y_{i}+\varepsilon_{n}^{2}na_{n}^{(i)}a_{n}^{(j)}
=\by^{\alpha}+O\left(n^{-(p-1)}\right). 
\end{align*}
\noindent
{\bf (2)} If only the $i$-th component is $2$ and the others are 0, i.e., $\alpha=(0, \cdots, 2, \cdots, 0)$,
\begin{align*}
\left(\by-\varepsilon_{n}\sqrt{n}\ba_{n}\right)^\alpha
=\left(y_{i}-\varepsilon_{n}\sqrt{n}a_{n}^{(i)}\right)^{2}
=y_{i}^{2}-2\varepsilon_{n}\sqrt{n}a_{n}^{(i)}y_{i}+\varepsilon_{n}^{2}n\left\{a_{n}^{(i)}\right\}^{2}
=\by^{\alpha}+O\left(n^{-(p-1)}\right), 
\end{align*}
where we used the facts \eqref{R-est}, $|y_{i}|\le \delta \sqrt{n}$ for all $i=1, \cdots, d$, $n\varepsilon_{n}=O\left(n^{-(p-1)}\right)$ and $n\varepsilon_{n}^{2}=O\left(n^{-(2p-1)}\right)$. Therefore, in both the above cases, there exists $L_{n}(\by, \alpha)\in \R$ such that the following result holds: 
\begin{equation}\label{L-est}
\left(\by-\varepsilon_{n}\sqrt{n}\ba_{n}\right)^\alpha=\by^{\alpha}+ L_{n}(\by, \alpha) \quad \text{with} \quad L_{n}(\by, \alpha)=O\left(n^{-(p-1)}\right), \quad \text{for} \quad |\alpha|=2. 
\end{equation}
By using this fact and Taylor's theorem, there exists $\theta_4=\theta_{4}(\by, \bc_{n}, n) \in (0,1)$ such that
\begin{align}
&\exp\left\{\sum_{|\alpha|=2} \frac{1}{\alpha !} \partial^\alpha h_{n}(\bc_{n})\left(\by-\varepsilon_{n}\sqrt{n}\ba_{n}\right)^\alpha\right\} 
=\exp\left\{\sum_{|\alpha|=2} \frac{1}{\alpha !} \partial^\alpha h_{n}(\bc_{n})(\by^{\alpha}+L_{n}(\by, \alpha))\right\}  \nonumber \\
&=\exp\left\{ \frac{1}{2}\sum_{i=1}^{d}\sum_{j=1}^{d}\left(D^{2}h_{n}(\bc_{n})\right)_{ij}y_{i}y_{j} \right\} \exp\left(\sum_{|\alpha|=2} \frac{1}{\alpha !} \partial^\alpha h_{n}(\bc_{n})L_{n}(\by, \alpha)\right) \nonumber \\
&=\exp\left\{ \frac{1}{2}\sum_{i=1}^{d}\sum_{j=1}^{d}\left(D^{2}h_{n}(\bc_{n})\right)_{ij}y_{i}y_{j}\right\} 
\left(1+M_{n}(\by)\right), \label{Jn2-junbi-2}
\end{align}
where a new remainder term $M_{n}(\by)$ is defined by 
\[
M_{n}(\by):=\left( \sum_{|\alpha|=2} \frac{1}{\alpha !} \partial^\alpha h_{n}(\bc_{n}) L_{n}(\by, \alpha)\right) \exp\left( \theta_{4}\sum_{|\alpha|=2} \frac{1}{\alpha !} \partial^\alpha h_{n}(\bc_{n})L_{n}(\by, \alpha)\right). 
\]
We note that the following error bound of $M_{n}(\by)$ can be obtained by \eqref{L-est} and the condition {\bf (v)}: 
\begin{align}
\left\|M_{n}\right\|_{C^{0}(\overline{\Omega})}
&\le \left(\sum_{|\alpha|=2}\frac{1}{\alpha!}\sup_{n\ge N_{1}}\left\|\partial^{\alpha}h_{n}\right\|_{C^{0}(\overline{\Omega})}\left\|L_{n}(\cdot,\alpha)\right\|_{C^{0}(\overline{\Omega})}\right) \nonumber \\
&\ \ \ \times \exp\left(\sum_{|\alpha|=2}\frac{1}{\alpha!}\sup_{n\ge N_{1}}\left\|\partial^{\alpha}h_{n}\right\|_{C^{0}(\overline{\Omega})}\left\|L_{n}(\cdot,\alpha)\right\|_{C^{0}(\overline{\Omega})}\right) 
=O\left(n^{-(p-1)}\right). \label{Mn-est}
\end{align}
Moreover, using Taylor's theorem again, there exists $\theta_5=\theta_{5}(\by, \bc_{n}, n) \in (0,1)$  such that
\begin{align}
&\exp \left\{n\sum_{|\alpha|=3} \frac{1}{\alpha !} \partial^\alpha h_{n}\left(\bc_{n}+\theta_0\frac{\by}{\sqrt{n}} -\theta_{0}\varepsilon_{n}\ba_{n}\right)\left(\frac{\by}{\sqrt{n}}-\varepsilon_{n}\ba_{n}\right)^\alpha \right\}  \nonumber \\
&=1+ n^{-\frac{1}{2}}\sum_{|\alpha|=3} \frac{1}{\alpha!} \partial^\alpha h_{n} \left(\bc_{n}+\theta_0\frac{\by}{\sqrt{n}} -\theta_{0}\varepsilon_{n}\ba_{n}\right) \left(\by-\varepsilon_{n}\sqrt{n}\ba_{n}\right)^\alpha \nonumber \\
&\ \ \ \times \exp\left\{\frac{\theta_5}{\sqrt{n}} \sum_{|\alpha|=3} \frac{1}{\alpha!} \partial^\alpha h_{n} \left(\bc_{n}+\theta_0\frac{\by}{\sqrt{n}} -\theta_{0}\varepsilon_{n}\ba_{n}\right) \left(\by-\varepsilon_{n}\sqrt{n}\ba_{n}\right)^\alpha \right\}. \label{Jn2-junbi-3}
\end{align}
Therefore, by combining \eqref{Jn2-junbi}, \eqref{Jn2-junbi-2} and \eqref{Jn2-junbi-3}, and rearranging the resulting expression, $J_n^{(2)}$ can be decomposed into the following three integrals using $J_n^{(2,1)}$, $J_n^{(2,2)}$ and $J_n^{(2,3)}$: 
\begin{align}
J_n^{(2)} &=n^{-\frac{d}{2}}\int_{B_{\delta\sqrt{n}}(0)}
\exp\left\{ \frac{1}{2}\sum_{i=1}^{d}\sum_{j=1}^{d}\left(D^{2}h_{n}(\bc_{n})\right)_{ij}y_{i}y_{j}\right\} 
 \nonumber\\
&\ \ \ \times \left\{ \sum_{|\beta|=k} \frac{1}{\beta!}\partial^\beta g(\bc) \left(\frac{\by}{\sqrt{n}}\right)^\beta +  \sum_{|\beta|=k+1} \frac{1}{\beta!}\partial^\beta g\left(\bc+\theta_3\frac{\by}{\sqrt{n}}\right) \left(\frac{\by}{\sqrt{n}}\right)^\beta \right\}d\by  \nonumber \\
&\ \ \ +n^{-\frac{d}{2}}\int_{B_{\delta\sqrt{n}}(0)} M_{n}(\by)
\exp\left\{ \frac{1}{2}\sum_{i=1}^{d}\sum_{j=1}^{d}\left(D^{2}h_{n}(\bc_{n})\right)_{ij}y_{i}y_{j}\right\} 
 \nonumber\\
&\ \ \ \times \left\{ \sum_{|\beta|=k} \frac{1}{\beta!}\partial^\beta g(\bc) \left(\frac{\by}{\sqrt{n}}\right)^\beta +  \sum_{|\beta|=k+1} \frac{1}{\beta!}\partial^\beta g\left(\bc+\theta_3\frac{\by}{\sqrt{n}}\right) \left(\frac{\by}{\sqrt{n}}\right)^\beta \right\}d\by  \nonumber \\
&\ \ \ +n^{-\frac{d}{2}-\frac{1}{2}}\int_{B_{\delta\sqrt{n}}(0)} \left(1+M_{n}(\by)\right) \left\{\sum_{|\alpha|=3} \frac{1}{\alpha!} \partial^\alpha h_{n} \left(\bc_{n}+\theta_0\frac{\by}{\sqrt{n}} -\theta_{0}\varepsilon_{n}\ba_{n}\right) \left(\by-\varepsilon_{n}\sqrt{n}\ba_{n}\right)^\alpha \right\} \nonumber \\
&\ \ \ \times \exp\left\{\frac{1}{2}\sum_{i=1}^{d}\sum_{j=1}^{d}\left(D^{2}h_{n}(\bc_{n})\right)_{ij}y_{i}y_{j}+\frac{\theta_5}{\sqrt{n}} \sum_{|\alpha|=3} \frac{1}{\alpha!} \partial^\alpha h_{n} \left(\bc_{n}+\theta_0\frac{\by}{\sqrt{n}} -\theta_{0}\varepsilon_{n}\ba_{n}\right) \left(\by-\varepsilon_{n}\sqrt{n}\ba_{n}\right)^\alpha \right\} \nonumber \\
&\ \ \ \times \left\{ \sum_{|\beta|=k} \frac{1}{\beta!}\partial^\beta g(\bc) \left(\frac{\by}{\sqrt{n}}\right)^\beta +  \sum_{|\beta|=k+1} \frac{1}{\beta!}\partial^\beta g\left(\bc+\theta_3\frac{\by}{\sqrt{n}}\right) \left(\frac{\by}{\sqrt{n}}\right)^\beta \right\} d\by \nonumber \\
&=:\left(J_n^{(2,1)}+J_n^{(2,2)}+J_n^{(2,3)}\right). \label{J1,1+J1,2}
\end{align}

Now, let us derive the leading term of $I_{n}e^{-nh_{n}(\bc_{n})}$ from $J_n^{(2,1)}$. 
We start with calculating the integral. By virtue of Lemma \ref{lem:gaussian}, we can see that the following fact holds: 
\begin{align}
&n^{-\frac{d}{2}} \int_{\R^d}\exp\left\{\frac{1}{2}\sum_{i=1}^{d}\sum_{j=1}^{d}\left(D^{2}h_{n}(\bc_{n})\right)_{ij}y_{i}y_{j}\right\} \sum_{|\beta|=k} \frac{1}{\beta!} \partial^{\beta} g(\bc) \left(\frac{\by}{\sqrt{n}}\right)^\beta d\by \nonumber \\
&=
n^{-\frac{d}{2}-\frac{k}{2}} \sum_{|\beta|=k} \frac{1}{\beta!} \partial^\beta g(\bc) \int_{\R^d} \exp\left\{\frac{1}{2}\sum_{i=1}^{d}\sum_{j=1}^{d}\left(D^{2}h_{n}(\bc_{n})\right)_{ij}y_{i}y_{j}\right\}   \by^\beta d\by \nonumber  \\
&=n^{-\frac{d}{2}-\frac{k}{2}}\hspace{-5mm} \sum_{\substack{|\beta|=k\\ \beta=(\beta_1,\dots, \beta_d) \in (2\Z_{\ge0})^d}} \hspace{-5mm}\frac{1}{\beta!} \partial^\beta g(\bc)\prod_{i=1}^d \left( \frac{|\lambda_{i, n}(\bc_{n})|}{2}\right)^{-\frac{\beta_i+1}{2}} \Gamma\left( \frac{\beta_i +1}{2}\right) \nonumber \\
&=n^{-\frac{d}{2}-\frac{k}{2}} \hspace{-5mm} \sum_{\substack{|\beta|=k\\ \beta=(\beta_1,\dots, \beta_d) \in (2\Z_{\ge0})^d}} \hspace{-5mm} \frac{1}{\beta_{1}!\cdots \beta_{d}!} \partial^\beta g(\bc)\prod_{i=1}^d \left( \frac{|\lambda_{i, n}(\bc_{n})|}{2}\right)^{-\frac{\beta_i+1}{2}} \frac{\sqrt{\pi}(\beta_{i}-1)!!}{2^{\frac{\beta_{i}}{2}}} \nonumber \\
&=n^{-\frac{d}{2}-\frac{k}{2}}\sqrt{\frac{(2\pi)^d}{|\det D^{2}h_{n}(\bc_{n})|}}\hspace{-5mm} \sum_{\substack{|\beta|=k\\ \beta=(\beta_1,\dots, \beta_d) \in (2\Z_{\ge0})^d}} \hspace{-5mm} \left(\partial^\beta g(\bc) \right)\prod_{i=1}^d \frac{|\lambda_{i, n}(\bc_{n})|^{-\frac{\beta_i}{2}}}{\beta_i !!}. \label{leadn-junbi}
\end{align}
Here, we shall consider an expansion of $1/\sqrt{\det D^{2}h_{n}(\bc_{n})}$ in the above. 
Now, noticing that if $n\in \mathbb{N}$ is sufficiently large, then one can choose $C_{n}\in \R$ appeared from \eqref{Cn-est}, such that $|C_{n}|<|\det D^{2}h(\bc)|/2$,
\begin{equation}\label{Cn-bound}
\text{i.e.,} \ \ \frac{1}{2}<1+\frac{C_{n}}{\det D^{2}h(\bc)}<\frac{3}{2}.
\end{equation}
Therefore, we obtain the following expansion of $1/\sqrt{\det D^{2}h_{n}(\bc_{n})}$: 
\begin{align}
\sqrt{\frac{1}{|\det D^{2}h_{n}(\bc_{n})|}}
&=\sqrt{\frac{1}{|\det D^{2}h(\bc)+C_{n}|}}
=\sqrt{\frac{1}{|\det D^{2}h(\bc)|}}\sqrt{\frac{1}{|1+\frac{C_{n}}{\det D^{2}h(\bc)}|}}  \nonumber \\
&=\sqrt{\frac{1}{|\det D^{2}h(\bc)|}}\sqrt{\frac{1}{1+\frac{C_{n}}{\det D^{2}h(\bc)}}} 
=\sqrt{\frac{1}{|\det D^{2}h(\bc)|}}\left(1+\frac{1-\sqrt{1+\frac{C_{n}}{\det D^{2}h(\bc)}}}{\sqrt{1+\frac{C_{n}}{\det D^{2}h(\bc)}}}\right) \nonumber \\
&=\sqrt{\frac{1}{|\det D^{2}h(\bc)|}}\left\{1+\frac{-\frac{C_{n}}{\det D^{2}h(\bc)}}{\sqrt{1+\frac{C_{n}}{\det D^{2}h(\bc)}}\left(1+\sqrt{1+\frac{C_{n}}{\det D^{2}h(\bc)}}\right)}\right\} \nonumber  \\
&=:\sqrt{\frac{1}{|\det D^{2}h(\bc)|}}\left(1+E_{n}\right). \label{det-expan}
\end{align}
Note that the following error bound for $E_{n}$ directly follows from \eqref{det-expan}, \eqref{Cn-bound}, \eqref{Cn-est} and \eqref{det-expan-prop}: 
\begin{equation}\label{En-est}
|E_{n}| \le  \frac{|C_{n}|}{|\det D^{2}h(\bc)|}\frac{1}{\frac{1}{\sqrt{2}}\left(1+\frac{1}{\sqrt{2}}\right)}=O\left(n^{-p}\right). 
\end{equation}
Next, let us give an expansion of $|\lambda_{i, n}(\bc_{n})|^{-\frac{\beta_{i}}{2}}$ in \eqref{leadn-junbi}. By a similar argument to that from \eqref{Cn-bound} to \eqref{En-est}, if we take $n\in \mathbb{N}$ is sufficiently large, the value $\Lambda_{i, n} = \lambda_{i,n}(\bc_n)-\lambda_{i}(\bc)$ can be controlled easily. Then, using Taylor's theorem and Proposition \ref{lem:eigen}, there exists $\theta_{6}=\theta_{6}(\bc, i, n)\in (0, 1)$ such that 
\begin{align}
&\prod_{i=1}^d \frac{|\lambda_{i, n}(\bc_{n})|^{-\frac{\beta_i}{2}}}{\beta_i !!}
=\prod_{i=1}^d \frac{|\lambda_{i}(\bc)+\Lambda_{i, n}|^{-\frac{\beta_i}{2}}}{\beta_i !!}
=\prod_{i=1}^d \frac{|\lambda_{i}(\bc)|^{-\frac{\beta_i}{2}}}{\beta_i !!}\left|1+\frac{\Lambda_{i, n}}{\lambda_{i}(\bc)}\right|^{-\frac{\beta_i}{2}}  \nonumber \\
&\quad =\prod_{i=1}^d \frac{|\lambda_{i}(\bc)|^{-\frac{\beta_i}{2}}}{\beta_i !!}\left(1+\frac{\Lambda_{i, n}}{\lambda_{i}(\bc)}\right)^{-\frac{\beta_i}{2}} 
=\prod_{i=1}^d \frac{|\lambda_{i}(\bc)|^{-\frac{\beta_i}{2}}}{\beta_i !!}\left\{1-\frac{\beta_{i}}{2}\left(1+\frac{\theta_{6}\Lambda_{i, n}}{\lambda_{i}(\bc)}\right)^{-\frac{\beta_{i}}{2}-1} \frac{\Lambda_{i, n}}{\lambda_{i}(\bc)}\right\} \nonumber \\
&\quad =:\prod_{i=1}^d \frac{|\lambda_{i}(\bc)|^{-\frac{\beta_i}{2}}}{\beta_i !!}(1-H_{i, n})  \ \ \text{and} \ \ H_{i, n}=O\left(n^{-p}\right). \label{DEF-Hin}
\end{align}
Therefore, it follows from the definition of $J_n^{(2,1)}$ in \eqref{J1,1+J1,2}, the formulas \eqref{leadn-junbi}, \eqref{det-expan} and \eqref{DEF-Hin} that 
\begin{align}
J_n^{(2,1)} 
&=n^{-\frac{d}{2}} \left(\int_{\R^d}-\int_{\R^d \setminus B_{\delta\sqrt{n}}(0)} \right) 
\exp\left\{ \frac{1}{2}\sum_{i=1}^{d}\sum_{j=1}^{d}\left(D^{2}h_{n}(\bc_{n})\right)_{ij}y_{i}y_{j}\right\} 
 \sum_{|\beta|=k} \frac{1}{\beta!} \partial^{\beta} g(\bc) \left(\frac{\by}{\sqrt{n}}\right)^\beta d\by \nonumber \\
&\ \ \ +n^{-\frac{d}{2}} \int_{B_{\delta\sqrt{n}}(0)}\exp\left\{ \frac{1}{2}\sum_{i=1}^{d}\sum_{j=1}^{d}\left(D^{2}h_{n}(\bc_{n})\right)_{ij}y_{i}y_{j}\right\}  \sum_{|\beta|=k+1} \frac{1}{\beta!} \partial^{\beta} g\left(\bc + \theta_3\frac{\by}{\sqrt{n}}\right) \left(\frac{\by}{\sqrt{n}}\right)^\beta d\by \nonumber\\
&=n^{-\frac{d}{2}-\frac{k}{2}}(1+E_{n})\sqrt{\frac{(2\pi)^d}{|\det D^{2}h(\bc)|}} \hspace{-5mm} \sum_{\substack{|\beta|=k\\ \beta=(\beta_1,\dots, \beta_d) \in (2\Z_{\ge0})^d}} \hspace{-5mm} \left(\partial^\beta g(\bc) \right)\prod_{i=1}^d \frac{|\lambda_i(\bc)|^{-\frac{\beta_i}{2}}}{\beta_i !!}(1-H_{i, n}) \nonumber \\
&\ \ \ - n^{-\frac{d}{2}-\frac{k}{2}} \int_{\R^d \setminus B_{\delta\sqrt{n}}(0)} \exp\left\{ \frac{1}{2}\sum_{i=1}^{d}\sum_{j=1}^{d}\left(D^{2}h_{n}(\bc_{n})\right)_{ij}y_{i}y_{j}\right\}  \sum_{|\beta|=k} \frac{1}{\beta!} \partial^{\beta} g(\bc) \by^\beta d\by \nonumber\\
&\ \ \ +n^{-\frac{d}{2}-\frac{k}{2}-\frac{1}{2}} \int_{B_{\delta\sqrt{n}}(0)}\exp\left\{ \frac{1}{2}\sum_{i=1}^{d}\sum_{j=1}^{d}\left(D^{2}h_{n}(\bc_{n})\right)_{ij}y_{i}y_{j}\right\} \sum_{|\beta|=k+1} \frac{1}{\beta!} \partial^{\beta} g\left(\bc + \theta_3\frac{\by}{\sqrt{n}}\right) \by^\beta d\by \nonumber\\
&=:J_n^{(2,1,1)}-J_n^{(2,1,2)}+J_n^{(2,1,3)}. \label{leading-term}
\end{align}
Then, applying \eqref{En-est} and \eqref{DEF-Hin} to $J_n^{(2,1,1)}$, we can easily have
\begin{equation}\label{lead-ex}
J_n^{(2,1,1)}=n^{-\frac{d}{2}-\frac{k}{2}}\sqrt{\frac{(2\pi)^d}{|\det D^{2}h(\bc)|}} \hspace{-5mm} \sum_{\substack{|\beta|=k\\ \beta=(\beta_1,\dots, \beta_d) \in (2\Z_{\ge0})^d}} \hspace{-5mm} \left(\partial^\beta g(\bc) \right)\prod_{i=1}^d \frac{|\lambda_i(\bc)|^{-\frac{\beta_i}{2}}}{\beta_i !!}
+O\left(n^{-\frac{d}{2}-\frac{k}{2}-p}\right). 
\end{equation}
Thus, we were able to obtain the leading term of $I_{n}e^{-nh_{n}(\bc_{n})}$. 

On the other hand, $J_n^{(2,1,2)}$, $J_n^{(2,1,3)}$, $J_n^{(2, 2)}$ and $J_n^{(2, 3)}$ are error terms. 
Actually, we can evaluate them from direct calculations. First, we deal with $J_n^{(2,1,2)}$ and $J_n^{(2,1,3)}$. It follows from the condition {\bf (vi)} that 
\begin{align}
\left|J_n^{(2,1,2)}\right| 
&\le n^{-\frac{d}{2}-\frac{k}{2}} \sup_{\R^d\setminus B_{\delta\sqrt{n}}(0)} \exp\left\{ \frac{1}{4}\sum_{i=1}^{d}\sum_{j=1}^{d}\left(D^{2}h_{n}(\bc_{n})\right)_{ij}y_{i}y_{j}\right\}  \nonumber \\
&\ \ \ \times \hspace{-5mm}\sum_{\substack{|\beta|=k\\ \beta=(\beta_1,\dots, \beta_d) \in \Z_{\ge0}^d}}\hspace{-5mm} \frac{|\partial^\beta g(\bc)|}{\beta!} \int_{\R^d\setminus B_{\delta\sqrt{n}}(0)}\exp\left\{ \frac{1}{4}\sum_{i=1}^{d}\sum_{j=1}^{d}\left(D^{2}h_{n}(\bc_{n})\right)_{ij}y_{i}y_{j}\right\}|y_{1}|^{\beta_{1}}\cdots |y_{d}|^{\beta_{d}} d\by \nonumber\\
&\le n^{-\frac{d}{2}-\frac{k}{2}} \sup_{\R^d\setminus B_{\delta\sqrt{n}}(0)} \exp\left(-\frac{1}{4}\sum_{i=1}^{d} |\lambda_{i, n}(\bc_{n})|^2 y_i^2\right)  \nonumber \\
&\ \ \ \times \hspace{-5mm} \sum_{\substack{|\beta|=k\\ \beta=(\beta_1,\dots, \beta_d) \in \Z_{\ge0}^d}} \hspace{-5mm} \frac{|\partial^\beta g(\bc)|}{\beta!}  \int_{\R^d}\exp\left(-\frac{1}{4}\sum_{i=1}^{d} |\lambda_{i, n}(\bc_{n})|^2 y_i^2\right)|y_{1}|^{\beta_{1}}\cdots |y_{d}|^{\beta_{d}} d\by \nonumber \\
&= n^{-\frac{d}{2}-\frac{k}{2}} \exp\left(-\frac{\delta^{2} n}{4}\sum_{i=1}^{d} |\lambda_{i, n}(\bc_{n})|^2 \right) \hspace{-5mm} \sum_{\substack{|\beta|=k\\ \beta=(\beta_1,\dots, \beta_d) \in \Z_{\ge0}^d}} \hspace{-5mm} \frac{|\partial^\beta g(\bc)|}{\beta!} \prod_{i=1}^{d}\left(\frac{|\lambda_{i, n}(\bc_{n})|}{2}\right)^{-(\beta_{i}+1)}\Gamma\left(\frac{\beta_{i}+1}{2}\right) \nonumber \\
&\le n^{-\frac{d}{2}-\frac{k}{2}} \exp\left(-\frac{C_{\dag} \delta^{2} n}{4}\right) \hspace{-5mm}\sum_{\substack{|\beta|=k\\ \beta=(\beta_1,\dots, \beta_d) \in \Z_{\ge0}^d}}\hspace{-5mm} \frac{|\partial^\beta g(\bc)|}{\beta!} \prod_{i=1}^{d}\left(\frac{C_{\dag}}{2}\right)^{-(\beta_{i}+1)}\Gamma\left(\frac{\beta_{i}+1}{2}\right) \nonumber \\
&=O\left(n^{-\frac{d}{2}-\frac{k}{2}} \exp\left(-\frac{C_{\dag} \delta^{2} n}{4}\right)\right) \label{est-J111}
\end{align}
and 
\begin{align}
\left|J_n^{(2,1,3)}\right|
&\le n^{-\frac{d}{2}-\frac{k}{2}-\frac{1}{2}} \hspace{-5mm} \sum_{\substack{|\beta|=k+1\\ \beta=(\beta_1,\dots, \beta_d) \in \Z_{\ge0}^d}} \hspace{-5mm} \frac{1}{\beta!} \left\|\partial^\beta g\right\|_{C^{0}(\overline{\Omega})} 
\int_{\R^d}\exp\left(-\frac{1}{2}\sum_{i=1}^{d} |\lambda_{i, n}(\bc_{n})|^2 y_i^2\right)|y_{1}|^{\beta_{1}}\cdots |y_{d}|^{\beta_{d}} d\by  \nonumber \\
&=n^{-\frac{d}{2}-\frac{k}{2}-\frac{1}{2}} \hspace{-5mm} \sum_{\substack{|\beta|=k+1\\ \beta=(\beta_1,\dots, \beta_d) \in \Z_{\ge0}^d}} \hspace{-5mm} \frac{1}{\beta!} \left\|\partial^\beta g\right\|_{C^{0}(\overline{\Omega})} \prod_{i=1}^{d}\left(\frac{|\lambda_{i, n}(\bc_{n})|}{2}\right)^{-(\beta_{i}+1)}\Gamma\left(\frac{\beta_{i}+1}{2}\right)  \nonumber \\
&\le n^{-\frac{d}{2}-\frac{k}{2}-\frac{1}{2}} \hspace{-5mm} \sum_{\substack{|\beta|=k+1\\ \beta=(\beta_1,\dots, \beta_d) \in \Z_{\ge0}^d}} \hspace{-5mm} \frac{1}{\beta!} \left\|\partial^\beta g\right\|_{C^{0}(\overline{\Omega})} \prod_{i=1}^{d}\left(\frac{C_{\dag}}{2}\right)^{-(\beta_{i}+1)}\Gamma\left(\frac{\beta_{i}+1}{2}\right) \nonumber \\
&= O\left( n^{-\frac{d}{2}-\frac{k}{2}-\frac{1}{2}}\right). \label{est-J112}
\end{align}
Also, by virtue of \eqref{Mn-est}, we analogously obtain the estimate for $J_n^{(2,2)}$ like \eqref{est-J111} and \eqref{est-J112} as follows: 
\begin{align}
\left|J_n^{(2,2)}\right|
&\le n^{-\frac{d}{2}}\left\|M_{n}\right\|_{C^{0}(\overline{\Omega})}\biggl\{n^{-\frac{k}{2}}\hspace{-5mm}\sum_{\substack{|\beta|=k\\ \beta=(\beta_1,\dots, \beta_d) \in \Z_{\ge0}^d}} \hspace{-5mm} \frac{1}{\beta!}\left|\partial^{\beta}g(\bc)\right|\int_{\R^d}\exp\left(-\frac{1}{2}\sum_{i=1}^{d} |\lambda_{i, n}(\bc_{n})|^2 y_i^2\right)|y_{1}|^{\beta_{1}}\cdots |y_{d}|^{\beta_{d}} d\by  \nonumber \\
&\ \ \ +n^{-\frac{k}{2}-\frac{1}{2}}\hspace{-5mm}\sum_{\substack{|\beta|=k+1\\ \beta=(\beta_1,\dots, \beta_d) \in \Z_{\ge0}^d}}\hspace{-5mm}\frac{1}{\beta!}\left\|\partial^{\beta}g\right\|_{C^{0}(\overline{\Omega})}\int_{\R^d}\exp\left(-\frac{1}{2}\sum_{i=1}^{d} |\lambda_{i, n}(\bc_{n})|^2 y_i^2\right)|y_{1}|^{\beta_{1}}\cdots |y_{d}|^{\beta_{d}} d\by\biggl\} \nonumber\\
&\le n^{-\frac{d}{2}}\left\|M_{n}\right\|_{C^{0}(\overline{\Omega})}\biggl\{n^{-\frac{k}{2}}\hspace{-5mm}\sum_{\substack{|\beta|=k\\ \beta=(\beta_1,\dots, \beta_d) \in \Z_{\ge0}^d}}\hspace{-5mm}\frac{1}{\beta!}\left|\partial^{\beta}g(\bc)\right|\prod_{i=1}^{d}\left(\frac{C_{\dag}}{2}\right)^{-(\beta_{i}+1)}\Gamma\left(\frac{\beta_{i}+1}{2}\right)  \nonumber \\
&\ \ \ +n^{-\frac{k}{2}-\frac{1}{2}}\hspace{-5mm}\sum_{\substack{|\beta|=k+1\\ \beta=(\beta_1,\dots, \beta_d) \in \Z_{\ge0}^d}}\hspace{-5mm}\frac{1}{\beta!}\left\|\partial^{\beta}g\right\|_{C^{0}(\overline{\Omega})}\prod_{i=1}^{d}\left(\frac{C_{\dag}}{2}\right)^{-(\beta_{i}+1)}\Gamma\left(\frac{\beta_{i}+1}{2}\right)\biggl\} \nonumber\\
&=O\left(n^{-\frac{d}{2}-\frac{k}{2}-(p-1)}\right)+O\left(n^{-\frac{d}{2}-\frac{k}{2}-\frac{1}{2}-(p-1)}\right). \label{J22-est}
\end{align}

In the rest of this proof, we shall evaluate the other remainder term $J_n^{(2,3)}$ in \eqref{J1,1+J1,2}. 
In order to do that, let us rewrite $J_n^{(2,3)}$ as follows: 
\begin{align}
&J_n^{(2,3)}
= n^{-\frac{d}{2}-\frac{k}{2}-\frac{1}{2}} \sum_{|\alpha|=3} \sum_{|\beta|=k} \frac{1}{\alpha! \beta!}   \int_{B_{\delta\sqrt{n}}(0)} \left(1+M_{n}(\by)\right)\partial^\alpha h_{n} \left(\bc_{n} + \theta_0 \frac{\by}{\sqrt{n}}-\theta_{0}\varepsilon_{n}\ba_{n}\right) \partial^\beta g(\bc) \nonumber \\
&\ \ \ \times \exp\left\{\frac{1}{2}\sum_{i=1}^{d}\sum_{j=1}^{d}\left(D^{2}h_{n}(\bc_{n})\right)_{ij}y_{i}y_{j}+\frac{\theta_5}{\sqrt{n}} \sum_{|\alpha|=3} \frac{1}{\alpha!} \partial^\alpha h_{n} \left(\bc_{n}+\theta_0\frac{\by}{\sqrt{n}} -\theta_{0}\varepsilon_{n}\ba_{n}\right) \left(\by-\varepsilon_{n}\sqrt{n}\ba_{n}\right)^\alpha \right\} \nonumber\\
&\ \ \ \times \left(\by-\varepsilon_{n}\sqrt{n}\ba_{n}\right)^{\alpha}\by^{\beta}d\by \nonumber\\
&\ \ \ +n^{-\frac{d}{2}-\frac{k}{2}-1} \sum_{|\alpha|=3}\sum_{|\beta|=k+1} \frac{1}{\alpha!\beta!} \int_{B_{\delta\sqrt{n}}(0)}\left(1+M_{n}(\by)\right)\partial^\alpha h_{n} \left(\bc_{n} + \theta_0 \frac{\by}{\sqrt{n}}-\theta_{0}\varepsilon_{n}\ba_{n}\right) \partial^\beta g \left(\bc + \theta_3 \frac{\by}{\sqrt{n}}\right) \nonumber\\
&\ \ \ \times \exp\left\{\frac{1}{2}\sum_{i=1}^{d}\sum_{j=1}^{d}\left(D^{2}h_{n}(\bc_{n})\right)_{ij}y_{i}y_{j}+\frac{\theta_5}{\sqrt{n}} \sum_{|\alpha|=3} \frac{1}{\alpha!} \partial^\alpha h_{n} \left(\bc_{n}+\theta_0\frac{\by}{\sqrt{n}} -\theta_{0}\varepsilon_{n}\ba_{n}\right) \left(\by-\varepsilon_{n}\sqrt{n}\ba_{n}\right)^\alpha \right\} \nonumber\\
&\ \ \ \times \left(\by-\varepsilon_{n}\sqrt{n}\ba_{n}\right)^{\alpha}\by^{\beta}d\by. \label{J23-junbi}
\end{align}
In addition, we need to prepare an estimate for the exponential function in the above. 
In order to do that, we would like to analyze $\left(\by-\varepsilon_{n}\sqrt{n}\ba_{n}\right)^\alpha$ for $|\alpha|=3$, by using a similar argument to \eqref{L-est}. It can be divided into the following three cases:  

\smallskip
\noindent
{\bf (1)} Three of components are 1 and the others are 0, i.e., $\alpha=(0, \cdots, 1, \cdots, 1, \cdots, 1, \cdots, 0)$. \\[.3em]
\noindent
{\bf (2)} One of the components is 2, the other is 1, and the rest are 0, i.e., $\alpha=(0, \cdots, 2, \cdots, 1, \cdots, 0)$. \\[.3em]
\noindent
{\bf (3)} One of the components is 3, and the rest are 0, i.e., $\alpha=(0, \cdots, 3, \cdots, 0)$. 

\smallskip
\noindent
In any of the above cases, analogously as \eqref{L-est}, we can obtain the following result: 
\begin{equation}\label{Q-est}
\left(\by-\varepsilon_{n}\sqrt{n}\ba_{n}\right)^\alpha=\by^{\alpha}+ Q_{n}(\by, \alpha) \quad \text{with} \quad Q_{n}(\by, \alpha)=O\left(n^{-p+\frac{3}{2}}\right), \quad \text{for} \quad |\alpha|=3. 
\end{equation}
By virtue of this result, from the condition {\bf (v)} and {\bf (vi)}, we are able to see that the following estimate holds for all $\by \in B_{\delta\sqrt{n}}(0)$: 
\begin{align}
&\exp\left\{\frac{1}{2}\sum_{i=1}^{d}\sum_{j=1}^{d}\left(D^{2}h_{n}(\bc_{n})\right)_{ij}y_{i}y_{j}+\frac{\theta_5}{\sqrt{n}} \sum_{|\alpha|=3} \frac{1}{\alpha!} \partial^\alpha h_{n} \left(\bc_{n}+\theta_0\frac{\by}{\sqrt{n}} -\theta_{0}\varepsilon_{n}\ba_{n}\right) \left(\by-\varepsilon_{n}\sqrt{n}\ba_{n}\right)^\alpha \right\} \nonumber \\
&=\exp\left\{-\frac{1}{2}\sum_{i=1}^{d}\left|\lambda_{i, n}(\bc_{n})\right|^{2}y_{i}^{2}+\frac{\theta_5}{\sqrt{n}} \sum_{|\alpha|=3} \frac{1}{\alpha!} \partial^\alpha h_{n} \left(\bc_{n}+\theta_0\frac{\by}{\sqrt{n}} -\theta_{0}\varepsilon_{n}\ba_{n}\right) \left(\by^{\alpha}+Q_{n}(\by, \alpha)\right) \right\} \nonumber\\
&\le \exp\left\{ -\frac{C_{\dag}}{2} |\by|^{2}  +  \delta|\by|^{2}\sum_{|\alpha|=3} \frac{1}{\alpha!}\left\|\partial^\alpha h_{n}\right\|_{C^{0}(\overline{\Omega})}+\sum_{|\alpha|=3} \frac{1}{\alpha!}\left\|\partial^\alpha h_{n}\right\|_{C^{0}(\overline{\Omega})}\frac{\left\|Q_{n}(\cdot, \alpha)\right\|_{C^{0}(\overline{\Omega})}}{\sqrt{n}} \right\} \nonumber\\
&= \exp\left\{ -\left(\frac{C_{\dag}}{2}-\delta \sum_{|\alpha|=3} \frac{1}{\alpha!}\left\|\partial^\alpha h_{n}\right\|_{C^{0}(\overline{\Omega})} \right)|\by|^{2}\right\} \exp\left(\sum_{|\alpha|=3} \frac{1}{\alpha!}\left\|\partial^\alpha h_{n}\right\|_{C^{0}(\overline{\Omega})}\frac{\left\|Q_{n}(\cdot, \alpha)\right\|_{C^{0}(\overline{\Omega})}}{\sqrt{n}}  \right) \nonumber\\
&\le C_{0}\exp\left(-B|\by|^{2}\right), \label{exp-est-new}
\end{align}
where the constant $B\in \R$ and $C_{0}>0$ are defined by
\[
B:=\frac{C_{\dag}}{2}-\delta \sum_{|\alpha|=3} \frac{1}{\alpha!}\sup_{n\ge N_{1}}\left\|\partial^\alpha h_{n}\right\|_{C^{0}(\overline{\Omega})}, \quad
C_{0}:=\exp\left\{\sup_{n\ge N_{1}}\left(\sum_{|\alpha|=3} \frac{1}{\alpha!}\left\|\partial^\alpha h_{n}\right\|_{C^{0}(\overline{\Omega})}\frac{\left\|Q_{n}(\cdot, \alpha)\right\|_{C^{0}(\overline{\Omega})}}{\sqrt{n}} \right)\right\}. 
\]
Noticing that if $\delta>0$ is small enough, then we can consider $B>0$. 
Also, we emphasize that $C_{0}<\infty$, since $n^{-\frac{1}{2}}\left\|Q_{n}(\cdot, \alpha)\right\|_{C^{0}(\overline{\Omega})}=O\left(n^{-(p-1)}\right)$ follows from \eqref{Q-est}. 
Therefore, it follows from \eqref{J23-junbi}, \eqref{Q-est}, \eqref{exp-est-new}, \eqref{Mn-est} and the condition {\bf (v)} that 
\begin{align}
\left|J_n^{(2,3)}\right| 
&\le n^{-\frac{d}{2}-\frac{k}{2}-\frac{1}{2}} \sum_{|\alpha|=3}\sum_{|\beta|=k} \frac{1}{\alpha!\beta!} \sup_{n\ge N_{1}}\left\|\partial^\alpha h_{n}\right\|_{C^{0}(\overline{\Omega})}\left|\partial^\beta g(\bc)\right|\nonumber \\
&\ \ \ \times C_{0}\int_{\R^d} \left(1+\left\|M_{n}\right\|_{C^{0}(\overline{\Omega})}\right)\exp\left(-B|\by|^{2}\right) \left(|\by|^{|\alpha|}+\left\|Q_{n}(\cdot, \alpha)\right\|_{C^{0}(\overline{\Omega})}\right)|\by|^{|\beta|}d\by \nonumber \\
&\ \ \ +n^{-\frac{d}{2}-\frac{k}{2}-1} \sum_{|\alpha|=3}\sum_{|\beta|=k+1} \frac{1}{\alpha!\beta!} \sup_{n\ge N_{1}}\left\|\partial^\alpha h_{n}\right\|_{C^{0}(\overline{\Omega})}\left\|\partial^\beta g\right\|_{C^{0}(\overline{\Omega})}\nonumber \\
&\ \ \ \times C_{0}\int_{\R^d} \left(1+\left\|M_{n}\right\|_{C^{0}(\overline{\Omega})}\right)\exp\left(-B|\by|^{2}\right) \left(|\by|^{|\alpha|}+\left\|Q_{n}(\cdot, \alpha)\right\|_{C^{0}(\overline{\Omega})}\right)|\by|^{|\beta|}d\by \nonumber \\
&= n^{-\frac{d}{2}-\frac{k}{2}-\frac{1}{2}} \sum_{|\alpha|=3}\sum_{|\beta|=k} \frac{1}{\alpha!\beta!} \sup_{n\ge N_{1}}\left\|\partial^\alpha h_{n}\right\|_{C^{0}(\overline{\Omega})}\left|\partial^\beta g(\bc)\right|\nonumber \\
&\ \ \ \times \biggl\{C_{0}\left(1+\left\|M_{n}\right\|_{C^{0}(\overline{\Omega})}\right)\int_{\R^d} \exp\left(-B|\by|^{2}\right) |\by|^{|\alpha|+|\beta|}d\by \nonumber \\
&\ \ \ \ \ \ \ \ \ +C_{0}\left(1+\left\|M_{n}\right\|_{C^{0}(\overline{\Omega})}\right)\left\|Q_{n}(\cdot, \alpha)\right\|_{C^{0}(\overline{\Omega})}\int_{\R^d} \exp\left(-B|\by|^{2}\right) |\by|^{|\beta|}d\by \biggl\} \nonumber \\
&\ \ \ +n^{-\frac{d}{2}-\frac{k}{2}-1} \sum_{|\alpha|=3}\sum_{|\beta|=k+1} \frac{1}{\alpha!\beta!} \sup_{n\ge N_{1}}\left\|\partial^\alpha h_{n}\right\|_{C^{0}(\overline{\Omega})}\left\|\partial^\beta g\right\|_{C^{0}(\overline{\Omega})}\nonumber \\
&\ \ \ \times \biggl\{C_{0}\left(1+\left\|M_{n}\right\|_{C^{0}(\overline{\Omega})}\right)\int_{\R^d} \exp\left(-B|\by|^{2}\right) |\by|^{|\alpha|+|\beta|}d\by \nonumber \\
&\ \ \ \ \ \ \ \ \ +C_{0}\left(1+\left\|M_{n}\right\|_{C^{0}(\overline{\Omega})}\right)\left\|Q_{n}(\cdot, \alpha)\right\|_{C^{0}(\overline{\Omega})}\int_{\R^d} \exp\left(-B|\by|^{2}\right) |\by|^{|\beta|}d\by \biggl\} \nonumber \\
&=O\left( n^{-\frac{d}{2}-\frac{k}{2}-\frac{1}{2}}\right)+O\left( n^{-\frac{d}{2}-\frac{k}{2}-(p-1)}\right)
+O\left( n^{-\frac{d}{2}-\frac{k}{2}-1}\right)+O\left( n^{-\frac{d}{2}-\frac{k}{2}-\left(p-\frac{1}{2}\right)}\right). \label{est-J12}
\end{align}

Eventually, summarizing up \eqref{J1+J2}, \eqref{est-J2}, \eqref{J1,1+J1,2}, \eqref{leading-term}, \eqref{lead-ex}, \eqref{est-J111},   \eqref{est-J112}, \eqref{J22-est} and \eqref{est-J12}, and comparing the decay orders obtained from each estimate, we arrive at the following result: 
\begin{align}
I_{n}e^{-nh_{n}(\bc_{n})}
&=n^{-\frac{d}{2}-\frac{k}{2}}\sqrt{\frac{(2\pi)^d}{|\det D^{2}h(\bc)|}} \hspace{-5mm}\sum_{\substack{|\beta|=k\\ \beta=(\beta_1,\dots, \beta_d) \in (2\Z_{\ge0})^d}} \hspace{-5mm} \left(\partial^\beta g(\bc) \right)\prod_{i=1}^d \frac{|\lambda_i(\bc)|^{-\frac{\beta_i}{2}}}{\beta_i !!} \nonumber \\
&\ \ \ \ +O\left( n^{-\frac{d}{2}-\frac{k}{2}-\frac{1}{2}}\right)+O\left( n^{-\frac{d}{2}-\frac{k}{2}-(p-1)}\right). \label{final-1}
\end{align}
Finally, we shall give an expansion of $e^{nh_{n}(\bc_{n})}$. It follows from \eqref{h-junbi-2} and the definition of $h_{n}$ that 
\begin{align}\label{finial-junbi-1}
e^{nh_{n}(\bc_{n})}=e^{nh(\bc_{n})+n\varepsilon_{n}\sigma(\bc_{n})}
=e^{nh(\bc)}\cdot e^{n\nabla h(\bc+\theta_{1}(\bc_{n}-\bc))(\bc_{n}-\bc)} \cdot e^{n\varepsilon_{n}\sigma(\bc_{n})}. 
\end{align}
By similar arguments as before, from Lemma \ref{lem:cn-c} and the condition {\bf (i)}, we can easily see that 
\[
\left|e^{n\nabla h(\bc+\theta_{1}(\bc_{n}-\bc))(\bc_{n}-\bc)} -1\right|
\le n\left|\bc_{n}-\bc \right|\left\|\nabla h\right\|_{C^{0}(\overline{\Omega})}\exp \left(n\left|\bc_{n}-\bc \right|\left\|\nabla h\right\|_{C^{0}(\overline{\Omega})}\right)
=O\left(n^{-(p-1)}\right)
\]
and
\[
\left|e^{n\varepsilon_{n}\sigma(\bc_{n})}-1\right|
\le n\varepsilon_{n}\left\|\sigma\right\|_{C^{0}(\overline{\Omega})}\exp \left(n\varepsilon_{n}\left\|\sigma\right\|_{C^{0}(\overline{\Omega})}\right)
=O\left(n^{-(p-1)}\right). 
\]
These estimates mean that 
\begin{align}\label{finial-junbi-2}
e^{n\nabla h(\bc+\theta_{1}(\bc_{n}-\bc))(\bc_{n}-\bc)}=1+O\left(n^{-(p-1)}\right), \quad 
e^{n\varepsilon_{n}\sigma(\bc_{n})}=1+O\left(n^{-(p-1)}\right). 
\end{align}
Then, \eqref{finial-junbi-1} and \eqref{finial-junbi-2} lead the following expansion:   
\begin{align}
e^{nh_{n}(\bc_{n})}=e^{nh(\bc)}\left(1+O\left(n^{-(p-1)}\right)\right). \label{final-2}
\end{align}
Therefore, summing up \eqref{final-1} and \eqref{final-2}, and rearranging the equation, we are able to conclude that the desired asymptotic formula \eqref{asymp} is true. This completes the proof of Theorem \ref{thm:main}. 
\qed

\subsection*{Acknowledgement}

This study is supported by Grant-in-Aid for Young Scientists Research No.22K13925 and No.22K13939, Japan Society for the Promotion of Science.



\bigskip

\hspace{-6mm}{\bf Ikki Fukuda}\\
Division of Mathematics and Physics, \\
Faculty of Engineering, Shinshu University, \\
4-17-1, Wakasato, Nagano, 380-8553, JAPAN\\
E-mail: i\_fukuda@shinshu-u.ac.jp\\

\hspace{-6mm}{\bf Yoshiki Kagaya}\\
Department of Mathematical Science, \\
Graduate School of Science and Technology, Shinshu University, \\
3-1-1, Asahi, Matsumoto, 390-8621, JAPAN\\
E-mail: yoshiki.kagaya@gmail.com\\

\hspace{-6mm}{\bf Yuki Ueda}\\
Department of Mathematics, \\
Faculty of Education, Hokkaido University of Education, \\
9 Hokumon-cho, Asahikawa, Hokkaido, 070-8621, JAPAN\\
E-mail: ueda.yuki@a.hokkyodai.ac.jp

\end{document}